\renewcommand\thefigure{\thesection.\@arabic\c@figure}
\renewcommand\thetable{\thesection.\@arabic\c@table}
\newtheorem{theorem}{Theorem}[section]
\newtheorem{lemma}[theorem]{Lemma}
\newtheorem{proposition}[theorem]{Proposition}
\newtheorem{corollary}[theorem]{Corollary}
\newcommand{\mc}[1]{{\mathcal #1}}
\newcommand{\ms}[1]{{\mathscr #1}}
\newcommand{\mf}[1]{{\mathfrak #1}}
\newcommand{\mb}[1]{{\mathbf #1}}
\newcommand{\bb}[1]{{\mathbb #1}}
\newcommand{\bs}[1]{{\boldsymbol #1}}
\newcommand{\<}{\langle}
\renewcommand{\>}{\rangle}
\renewcommand{\Cap}{{\rm cap}}
\begin{document}

\title[Metastability of continuous time Markov
  chains]{Tunneling and Metastability of continuous time Markov
  chains II, the nonreversible case}

\author{J. Beltr\'an, C. Landim}

\address{\noindent IMCA, Calle los Bi\'ologos 245, Urb. San C\'esar
  Primera Etapa, Lima 12, Per\'u and PUCP, Av. Universitaria cdra. 18,
  San Miguel, Ap. 1761, Lima 100, Per\'u. 
\newline e-mail: \rm
  \texttt{johel@impa.br} }

\address{\noindent IMPA, Estrada Dona Castorina 110, CEP 22460 Rio de
  Janeiro, Brasil and CNRS UMR 6085, Universit\'e de Rouen, Avenue de
  l'Universit\'e, BP.12, Technop\^ole du Madril\-let, F76801
  Saint-\'Etienne-du-Rouvray, France.  \newline e-mail: \rm
  \texttt{landim@impa.br} }

\keywords{Metastability, Tunneling, Markov processes.} 

\begin{abstract}
  We proposed in \cite{bl2} a new approach to prove the metastable
  behavior of reversible dynamics based on potential theory and local
  ergodicity. In this article we extend this theory to nonreversible
  dynamics based on the Dirichlet principle proved in \cite{gl2}.
\end{abstract}

\maketitle

\section{Introduction}

Metastability is a relevant dynamical phenomenon in non-equilibrium
statistical mechanics which occurs in the vicinities of first order
phase transitions \cite{ov}, \cite{g}.  In the sequel of \cite{lp,
  cgov, begk1}, we proposed in \cite{bl2} a new definition of
metastability for continuous time Markov chains which visit points,
and we presented in the reversible case three simple conditions which
ensure the metastable behavior of a chain. All three conditions are
formulated in terms of the stationary measure and of the capacity
between the metastable sets.

This method has been applied successfully in \cite{bl3} to prove the
metastable behavior of the condensate in reversible zero range
processes evolving on finite sets, in \cite{bl4, bl5} to examine the
metastability of reversible Markov chains evolving on fixed finite
sets, and in \cite{jlt1, jlt2} to investigate the scaling limits of
trap models on random graphs.

The proof of the metastable behavior of a reversible Markov chain
presented in \cite{bl2} relies essentially on two ingredients. On the
one hand, the potential theory of reversible Markov chains
\cite{begk2, g} permits to express the capacity between two sets in
terms of a variational problem, the so-called Dirichlet principle, and
provides simple formulas for the expectations of time integrals,
carried over time intervals ending at the hitting time of a set, in
terms of the equilibrium potentials. On the other hand, the local
ergodicity allows to replace a time integral of a function by the time
integral of the average of the function over the metastable sets.

We generalized recently in \cite{gl2} the potential theory of
reversible Markov chains to the non-reversible context by defining a
capacity between two sets and by proving a Dirichlet principle which
involves, in contrast with the reversible case, a double variational
formula.

In this article, we extend the theory elaborated in \cite{bl2} to
nonreversible dynamics aiming to examine the metastable behavior of
the condensate in asymmetric zero range processes evolving on a
one-dimensional finite torus \cite{gl3}. The main result, stated in
Theorem \ref{teo1r} below, asserts that the metastable behavior of a
sequence of continuous time Markov chains follows from the same three
conditions formulated in the reversible case.

One of the conditions, assumption ({\bf H0}) below, requires the
average jump rates of the trace process to converge. While in the
reversible case the average rates can be expressed in terms of
capacities, \cite[Remark 2.9]{bl2}, in the nonreversible case such
formula is not available.  We may, however, formulate a double
variational problem whose optimal solution is equal to the average
rate. This is the content of Proposition \ref{s05}.

It may seem that the characterization of the average jump rate
provided by Proposition \ref{s05} is deprived of interest and
impossible to use in concrete examples. This is not the case as shown
in \cite{gl3} where we apply this result to obtain the asymptotic
value of the average jump rates. This is done in two steps. We first
compute the limit of \eqref{09} as $N\uparrow\infty$ and derive the
optimal value for $f$ on the set $\ms E^x_N$. We then show that if the
supremum in \eqref{09} is carried over functions $h$ which are not
equal to the optimal value for $f$ on $\ms E^x_N$, the limit of
\eqref{09} is strictly smaller than the one obtained before.

The last main result of the article, Proposition \ref{s06}, asserts
that the adjoint of a Markov chain which exhibits a metastable
behavior is also metastable and its asymptotic dynamics is the adjoint
of the asymptotic dynamics of the original process.

\section{Notation and Results}

Fix a sequence $(E_N: N\ge 1)$ of countable state spaces. The elements
of $E_N$ are denoted by the Greek letters $\eta$, $\xi$. A sequence of
states $\bs \eta=(\eta^N\in E_N : N\ge 1)$ is said to be a point in a
sequence ${\ms A}$ of subsets of $E_N$, ${\ms A} = (A_N\subseteq E_N :
N\ge 1)$, if $\eta^N$ belongs to $A_N$ for every $N\ge 1$.

For each $N\ge 1$ consider a matrix $R_N : E_N \times E_N \to \bb R$
such that $R_N(\eta, \xi) \ge 0$ for $\eta \not = \xi$, $-\infty < R_N
(\eta, \eta)\le 0$ and $\sum_{\xi\in E_N} R_N(\eta,\xi)=0$ for all
$\eta\in E_N$. Denote by $L_N$ the generator which acts on bounded
functions $f:E_N\to \bb R$ as
\begin{equation}
\label{c01}
(L_Nf) (\eta) \,=\, \sum_{\xi\in E_N} R_N(\eta,\xi)
\, \big\{f(\xi)-f(\eta)\big\}\;.
\end{equation}

Let $\{\eta^N_t : t\ge 0\}$ be the {\sl minimal} right-continuous
Markov process associated to the generator $L_N$. It is well known
that $\{\eta^N_t : t\ge 0\}$ is a strong Markov process with respect
to the filtration $\{\mc F^N_t : t\ge 0\}$ given by $\mc F^N_t =
\sigma (\eta^N_s : s\le t)$ \cite{f, n}. We assume throughout this
section that for each $N\ge 1$ the process $\{\eta^N_t : t\ge 0\}$ is
\emph{irreducible} and \emph{positive recurrent}.

Denote by $\mu^N$ the unique stationary probability measure and let
$\lambda^N(\eta) = -R(\eta,\eta)$, $\eta\in E_N$, be the holding rates
of the process $\eta^N_t$. We assume that the holding rates are
integrable:  
\begin{equation}
\label{re1}
\sum_{\eta \in E_N} \lambda_N(\eta)\mu_N(\eta) \;<\; \infty 
\;,\quad \forall N\ge 1\;.
\end{equation}

Denote by $D(\bb R_+,E_N)$ the space of right-continuous trajectories
with left limits endowed with the Skorohod topology. Let
$\mb P^N_{\eta}$, $\eta\in E_N$, be the probability measure on $D(\bb
R_+,E_N)$ induced by the Markov process $\{\eta^N_t : t\ge 0\}$
starting from $\eta$. Expectation with respect to $\mb P^N_{\eta}$ is
denoted by $\mb E^N_{\eta}$ and we frequently omit the index $N$ in
$\mb P^N_{\eta}$, $\mb E^N_\eta$.

Denote by $H_A$ (resp. $H^+_A$), $A\subseteq E_N$, the hitting time of
(resp. return time to) the set $A$:
\begin{equation*}
\begin{split}
& H_{A} \;:=\;\inf\{ t> 0 : \eta^N_t\in A \}\;, \\
& \quad H^+_A \,:=\, \inf\{ t>0 : \eta^N_t\in A \,,\,
\eta^N_s\not= \eta^N_0 \;\;\textrm{for some $0< s < t$}\}\,.
\end{split}
\end{equation*}

Denote by $L^2(\mu_N)$ the space of square integrable functions
$f:E_N\to\bb R$, $\sum_{\eta\in E_N} f(\eta)^2 \mu_N(\eta)<\infty$,
endowed with the usual scalar product. Let $L^*$ be the adjoint of $L$
in $L^2(\mu_N)$. The operator $L^*$ corresponds to the generator of a
Markov process, denoted by $\{\eta^{*,N}_t : t\ge 0\}$, whose rates are
represented by $R^*_N$. Denote by $\mb P^* = \mb P^{*,N}_{\eta}$,
$\eta\in E_N$, the probability measure on $D(\bb R_+,E_N)$ induced by
the adjoint Markov process starting from $\eta$. Expectation with
respect to $\mb P^{*,N}_{\eta}$ is represented by $\mb E^{*,N}_{\eta}$

Fix a finite number of disjoint subsets $\ms E^1_N, \dots, \ms
E^\kappa_N$, $\kappa\ge 2$, of $E_N$: $\ms E^x_N\cap \ms
E^y_N=\varnothing$, $x\neq y$. Let $S=\{1, \dots, \kappa\}$, $\ms
E_N=\cup_{x\in S}\ms E^x_N$ and let $\Delta_N=E_N \setminus \ms E_N$
so that
\begin{equation}
\label{nv1}
E_N \,=\, \ms E^1_N\cup\dots \cup \ms E^{\kappa}_N
\cup\, \Delta_N \,=\, \ms E_N \, \cup\, \Delta_N \;. 
\end{equation}
Let $\breve{\ms E}^x_N := \ms E_N\setminus \ms E^x_N$,
\begin{equation*}
{\ms E} \;=\; (\ms E_N : N\ge 1) \,,\quad 
{\ms E}^x=(\ms E^x_N : N\ge 1) \quad\textrm{and}\quad 
\breve{\ms E}^x=(\breve{\ms E}^x_N : N\ge 1) \,. 
\end{equation*}

Denote by $\{\eta^{\ms E_N}_t: t\ge 0\}$ the trace of $\{\eta^N_t:
t\ge 0\} $ on $\ms E_N$ \cite{bl2}.  Let $R^{\ms E}_N : \ms E_N \times
\ms E_N\to \bb R_+$ be the transition rates of the trace process
$\{\eta^{\ms E_N}_t : t\ge 0\}$, and denote by $r_N(\ms E^x_N,\ms
E^y_N)$ the mean rate at which the trace process jumps from $\ms
E^x_N$ to $\ms E^y_N$:
\begin{equation}
\label{nv2}
r_N(\ms E^x_N,\ms E^y_N) \; := \; \frac{1}{\mu_N(\ms E^x_N)}
\sum_{\eta\in\ms E^x_N} \mu_N(\eta) 
\sum_{\xi\in\ms E^y_N} R^{\ms  E}_N(\eta,\xi) \;,
\end{equation}
and let
$$
r_N(\ms E^x_N,\breve{\ms E}^x_N) \,=\, \sum_{y\not = x} 
r_N(\ms E^x_N, \ms E^y_N)\,.
$$

For two disjoint subsets $A$, $B$ of $E_N$, denote by $\Cap_N(A,B)$
the capacity between $A$ and $B$, defined in \cite{gl2} by
\eqref{06} for irreducible positive recurrent Markov processes.
For a point ${\bs \xi}_x=(\xi^N_x : N\ge 1)$ in $\ms E^x$, let
\begin{equation*}
\Cap_N(\bs \xi_x) \;=\; \inf_{\eta\in \ms E^x_N} 
\Cap_N (\{\eta\},\{\xi^N_x\})\;.
\end{equation*}

Recall the notion of tunneling behavior and recall properties ({\bf
  M1}), ({\bf M2}), ({\bf M3}) introduced in Definition 2.2 of
\cite{bl2}.

\begin{theorem}
\label{teo1r}
Suppose that there exists a sequence $\bs \theta=(\theta_N : N\ge 1)$
of positive numbers such that, for every pair $x,y\in S$, $x\not= y$,
the following limit exists
\begin{equation}
\tag*{\bf (H0)}
r(x,y) \;:= \; \lim_{N\to\infty} \theta_N\, r_N(\ms E^x_N,\ms E^y_N) \,.
\end{equation}

Suppose that for each $x\in S$, there exists a point $\bs
\xi_x=(\xi^{N}_x : N\ge 1)$ in $\ms E^x$ such that
\begin{equation*}
\tag*{\bf (H1)} 
\lim_{N\to \infty} \frac{\Cap_N(\ms E^x_N,\breve{\ms E}^x_N)}
{\Cap_N(\bs \xi_x)}\;=\;0\; .
\end{equation*}
Then, for any points $\{ {\bs \zeta}_x \in \ms E^x : x\in S\}$,
properties $({\bf M1})$ and $({\bf M2})$ of tunneling hold on the
time-scale $\bs \theta$, with metastates $\{\ms E^x : x\in S\}$,
metapoints $\{\bs \zeta_x : x\in S\}$ and asymptotic Markov dynamics
characterized by the rates $r(x,y)$, $x,y\in S$.

If in addition we have that $\bf(M3)$ holds for each $x\in S$ which is
an absorbing state of the Markov dynamics on $S$ determined by the
rates $r$ and that
\begin{equation*}
\tag*{\bf (H2)}
\lim_{N \to \infty} \frac{ \mu_N(\Delta_N)}{\mu_N(\ms E^{x}_N) } 
\;=\; 0 
\end{equation*}
for each non-absorbing state $x\in S$, then property $\bf (M3)$ holds
for every $x\in S$.
\end{theorem}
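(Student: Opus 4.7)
The plan is to adapt the proof of the corresponding reversible statement (Theorem 2.7 of \cite{bl2}), systematically replacing reversible potential-theoretic tools with their nonreversible counterparts from \cite{gl2}. Throughout, I would work with the time-accelerated trace process $Y^N_t := \eta^{\ms E_N}_{t/\theta_N}$ on $\ms E_N$ and with its projection $X^N_t := \Psi_N(Y^N_t) \in S$, where $\Psi_N(\eta) = x$ for $\eta \in \ms E^x_N$, so that the original chain can be recovered, up to (M3)-type occupation-time errors in $\Delta_N$, from the behavior of $X^N$.

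To establish (M1) and (M2), I would show that the laws of $X^N$, starting from any metapoint $\zeta^N_x$, converge in the Skorohod topology to the law of the continuous-time Markov chain on $S$ with rates $r(x,y)$. First, using (H1) together with the capacity--hitting-time estimate available for nonreversible chains in \cite{gl2}, the probability that the process started at an arbitrary $\eta \in \ms E^x_N$ hits $\breve{\ms E}^x_N$ before visiting $\xi^N_x$ is bounded by a ratio of capacities which tends to zero. This ``fast equilibration inside each well'' reduces the analysis of $X^N$ started from any point of $\ms E^x_N$ to the analysis started from the fixed reference point $\xi^N_x$. Applying the strong Markov property at $\xi^N_x$ and using (H0), I would identify the limiting mean holding time at $x$ as $1/r(x,\breve{x})$, with $r(x,\breve{x}) = \sum_{y\neq x} r(x,y)$, and the limiting exit distribution as $r(x,y)/r(x,\breve{x})$, pinning down the limiting generator on $S$. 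Tightness on Skorohod space follows from an Aldous-type criterion using the uniform control on accelerated jump rates supplied by (H0).

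For (M3), I would split the argument according to whether $x\in S$ is absorbing for the limit dynamics. When $x$ is non-absorbing, the occupation time of $\eta^N_\cdot$ in $\Delta_N$ up to any macroscopic time $t$ can be bounded by decomposing it along the successive excursions of the original process from $\ms E_N$ between visits to $\breve{\ms E}^x_N$; stationarity of $\mu_N$ together with (H2) imply that each such excursion contributes, on average, at most $[\mu_N(\Delta_N)/\mu_N(\ms E^x_N)]$ times the mean sojourn in $\ms E^x_N$, and the number of excursions on $[0,t]$ is bounded in probability by (M2), so the total contribution is $o(1)$. When $x$ is absorbing the excursions eventually cease and this telescoping breaks down, which is precisely why (M3) has to be assumed a priori in that case.

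The main obstacle is that several capacity estimates used essentially without comment in the reversible argument of \cite{bl2}, notably monotonicity of $\Cap_N$ in its arguments and the Dirichlet-type bound $\Cap_N(A,B) \le \Cap_N(A', B)$ for $A\subseteq A'$, as well as the identification of mean hitting times with ratios of capacities, are not transparent consequences of the minimax formula of Proposition \ref{s05}. The plan is to invoke such estimates directly from \cite{gl2} when already established there, and otherwise to produce them by constructing explicit admissible pairs $(h,f)$ in the double variational formula. Checking that every capacity inequality used in the reversible proof admits such a substitute is the step at which the nonreversible argument genuinely departs from its predecessor and is where the real work concentrates.
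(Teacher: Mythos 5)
Your high-level plan (adapt the reversible proof of \cite{bl2} using the nonreversible Dirichlet principle of \cite{gl2}) is indeed the paper's plan, and you correctly isolate the absorbing/non-absorbing dichotomy and the role of ({\bf H2}) in ({\bf M3}). But the proposal is missing the tool on which the paper's whole verification rests: the occupation-time identity for nonreversible chains (Proposition \ref{bovier}, formula \eqref{g02}), $\mb E_{\eta}\big[\int_0^{H_B} g(\eta(s))\,ds\big] = \< g, f^*_{\{\eta\}B}\>_{\mu}/\Cap(\{\eta\},B)$, where $f^*$ is the equilibrium potential of the \emph{adjoint} process, combined with $\Cap^*=\Cap$ from \cite{gl2}. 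From it the paper derives \eqref{g03} and thereby checks conditions ({\bf C1})--({\bf C2}) of \cite[Theorem 2.4]{bl2}; these are expectation-level thermalization bounds (the time integral of $R^{\ms E}_N(\eta_s,\breve{\ms E}^x_N)$ before hitting $\xi^N_x$ is negligible), not hitting-probability bounds. Your ``fast equilibration'' step only controls $\mb P_\eta[H_{\breve{\ms E}^x_N}<H_{\xi^N_x}]$, and that is too weak for the step you then wave through: identifying the limiting holding rate with $\sum_{y\neq x} r(x,y)$. Hypothesis ({\bf H0}) is about $\mu_N$-averaged trace rates, and passing from the pointwise dynamics started at $\xi^N_x$ to those averages is exactly the local-ergodicity/replacement step that \cite[Theorem 2.4]{bl2} performs once ({\bf C1})--({\bf C2}) are available; your sketch supplies neither these estimates nor a substitute (and proving convergence of $X^N$ ``by hand'' via strong Markov at $\xi^N_x$ plus Aldous tightness would just be reproving that theorem, with the same missing ingredient). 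The same omission affects the passage to arbitrary metapoints $\bs\zeta_x$: the paper proves Lemma \ref{s04} by running \cite[Propositions 4.2 and 5.1]{bl2} for the adjoint process, obtaining both \eqref{vze} and the capacity asymptotics \eqref{ce}; the adjoint process never appears in your proposal.

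For ({\bf M3}) at a non-absorbing $x$, your claim that ``stationarity of $\mu_N$'' makes each excursion contribute on average at most $\mu_N(\Delta_N)/\mu_N(\ms E^x_N)$ times the mean sojourn in $\ms E^x_N$ is not justified: the process is not started from $\mu_N$ and the law during an excursion is not stationary. The paper instead bounds $\mb E_\eta[H_{\breve{\ms E}^x_N}(\Delta_N)]\le \mu_N(\Delta_N)/\Cap_N(\{\eta\},\breve{\ms E}^x_N)$ directly from \eqref{g02} (since $f^*\le 1$), then uses \eqref{ce} and the identity $\Cap_N(\ms E^x_N,\breve{\ms E}^x_N)=\mu_N(\ms E^x_N)\,r_N(\ms E^x_N,\breve{\ms E}^x_N)$ to conclude with ({\bf H0}) and ({\bf H2}); note that \eqref{ce} is a nontrivial asymptotic \emph{lower} bound on single-point capacities by the set capacity, again obtained through the adjoint-process argument. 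By contrast, the difficulty you single out as ``where the real work concentrates'' --- monotonicity-type capacity inequalities --- is a minor point settled immediately by the double variational formula of \cite{gl2}; the genuinely new nonreversible content is the adjoint-based occupation-time estimates that the proposal omits.
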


In contrast with the reversible case, we are not able to express the
average rates $r_N(\ms E^x_N,\ms E^y_N)$, $x$, $y\in S$, in terms of
capacities. We may, however, formulate a variational problem whose
optimal solution is equal to the average rate. 

\begin{proposition}
\label{s05}
Fix $x\not = y$ in $S$ and consider the variational problem
\begin{equation}
\label{09}
\inf_f \sup_h \big\{ 2\< f, L_N h \>_{\mu_N} - 
\< h, (-L_N) h \>_{\mu_N} \big\},
\end{equation}
where the infimum is carried over all functions $f:E_N\to \bb R$ which
are equal to $1$ on $\ms E^y_N$, $0$ on $\ms E_N \setminus (\ms E^x_N
\cup \ms E^y_N)$, and which are constant on $\ms E^x_N$, while the
supremum is carried over all functions $h:E_N\to \bb R$ which are
equal to $0$ on $\ms E_N \setminus (\ms E^x_N \cup \ms E^y_N)$ and
which are constant over $\ms E^y_N$, $\ms E^x_N$, with possibly
different values at each set. Then, the optimal function $h$
associated to the optimal function $f$ is such that
\begin{equation*}
h(\eta) \;=\; \frac {r_N(\ms E^x_N,\ms E^y_N) } 
{r_N(\ms E^x_N, \breve{\ms E}^x_N)} \quad\text{for all}\quad
\eta\in \ms E^x_N\;.
\end{equation*} 
\end{proposition}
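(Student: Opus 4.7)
The plan is to identify the saddle point $(f^*,h^*)$ through its Euler--Lagrange (EL) equations and then reduce the characterization of $h^*|_{\ms E^x_N}$ to a calculation on the trace process. Throughout, let $S_N:=(L_N+L_N^*)/2$ denote the symmetric part of $L_N$.

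For any admissible $f$, the inner functional $h\mapsto 2\langle f,L_Nh\rangle_{\mu_N}-\langle h,-L_Nh\rangle_{\mu_N}$ is concave: its quadratic part is the positive-semidefinite Dirichlet form $\tfrac12\sum\mu_N R_N[h(\xi)-h(\eta)]^2$. Its Gateaux derivative in a direction $\delta h$ equals $2\langle\delta h,L_N^*f+S_Nh\rangle_{\mu_N}$, so the inner EL conditions for the maximizer $h^*_f$ read $(L_N^*f+S_Nh^*_f)(\eta)=0$ on $\Delta_N$, together with $\sum_{\eta\in\ms E^z_N}\mu_N(\eta)(L_N^*f+S_Nh^*_f)(\eta)=0$ for $z\in\{x,y\}$. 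Since $\Phi(f,h):=2\langle f,L_Nh\rangle_{\mu_N}-\langle h,-L_Nh\rangle_{\mu_N}$ is linear in $f$, the envelope theorem gives the outer EL as $\langle\delta f,L_Nh^*\rangle_{\mu_N}=0$ for every admissible $\delta f$, i.e.\ $(L_Nh^*)(\eta)=0$ on $\Delta_N$ and $\sum_{\eta\in\ms E^x_N}\mu_N(\eta)(L_Nh^*)(\eta)=0$, where $h^*:=h^*_{f^*}$.

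The first outer EL says that $h^*$ is the $L_N$-harmonic extension of its values on $\ms E_N$, so $(L_Nh^*)(\eta)=(L_N^{\ms E_N}h^*|_{\ms E_N})(\eta)$ for $\eta\in\ms E_N$ by the standard trace-process identity from \cite{bl2}. Writing $a=h^*|_{\ms E^x_N}$ and $b=h^*|_{\ms E^y_N}$, direct expansion of the trace generator applied to the piecewise constant restriction $h^*|_{\ms E_N}$, combined with definition \eqref{nv2}, yields
\begin{equation*}
\sum_{\eta\in\ms E^x_N}\mu_N(\eta)\,(L_Nh^*)(\eta)\;=\;\mu_N(\ms E^x_N)\big[\,b\,r_N(\ms E^x_N,\ms E^y_N)-a\,r_N(\ms E^x_N,\breve{\ms E}^x_N)\,\big];
\end{equation*}
the second outer EL then forces the ratio $a/b=r_N(\ms E^x_N,\ms E^y_N)/r_N(\ms E^x_N,\breve{\ms E}^x_N)$.

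The remaining step, which I expect to be the technical core, is to pin down $b=1$. My plan is to introduce $g^*:=f^*+\tfrac12 h^*$ and verify, by combining the outer EL $L_Nh^*=0$ with the inner EL $L_N^*f^*+S_Nh^*=0$ on $\Delta_N$, that $L_N^*g^*=0$ on $\Delta_N$; that is, $g^*$ is the $L_N^*$-harmonic extension of its boundary values on $\ms E_N$. Rewriting the inner EL sums on $\ms E^x_N$ and $\ms E^y_N$ in terms of $g^*$ and applying the analogous trace-process identity for the adjoint chain yields two linear relations among the boundary values of $g^*$. The pinning $f^*|_{\ms E^y_N}=1$ together with the duality identity $\mu_N(\ms E^x_N)\,r_N(\ms E^x_N,\ms E^y_N)=\mu_N(\ms E^y_N)\,r^*_N(\ms E^y_N,\ms E^x_N)$ between direct and adjoint trace rates then allows one to extract $b=1$, completing the identification of $h^*$ on $\ms E^x_N$. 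Carefully untangling the asymmetry between the pinning of $f$ and the freedom of $h$ on $\ms E^y_N$ is where the bulk of the calculation lies.
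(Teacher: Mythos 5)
Your outer Euler--Lagrange computation is correct as far as it goes: for admissible variations of $f$ one gets $(L_Nh^*)(\eta)=0$ on $\Delta_N$ and $\sum_{\eta\in\ms E^x_N}\mu_N(\eta)(L_Nh^*)(\eta)=0$, and via the harmonic-extension/trace identity this yields $a\,r_N(\ms E^x_N,\breve{\ms E}^x_N)=b\,r_N(\ms E^x_N,\ms E^y_N)$ with $a=h^*|_{\ms E^x_N}$, $b=h^*|_{\ms E^y_N}$. But this is only the easy, homogeneous half of the statement, and your proof stops exactly where the content lies: nothing you have written determines $b$, and the last paragraph is an unexecuted plan rather than an argument. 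Identifying the value of $h^*$ on $\ms E^y_N$ is precisely what the paper's proof is devoted to: in Proposition \ref{s02} the set playing the role of $\ms E^x_N$ is collapsed to a point $\mf d$, identity \eqref{04} turns \eqref{03} into the Dirichlet principle of \cite{gl2} for the capacity, for the collapsed chain, between $\ms E^y_N$ and $\ms E_N\setminus(\ms E^x_N\cup\ms E^y_N)$, Theorem 2.4 of \cite{gl2} identifies the optimal $h$ as the corresponding equilibrium potential (this is what fixes its value on $\ms E^y_N$), and Lemma \ref{s03} (trace and collapse commute) together with \cite[Proposition 6.1]{bl2} converts $\mb P^C_{\mf d}[H_{A_1}<H_{A_0}]$ into the ratio of mean rates. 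In your scheme the missing normalization has to come from the averaged inner EL condition over $\ms E^y_N$, and carrying that out requires exactly the capacity identities underlying the paper's route (e.g.\ $\sum_{\eta\in\ms E^y_N}\mu_N(\eta)(L_NV)(\eta)=-\Cap_N$, $\Cap^*_N=\Cap_N$, \eqref{a03}); so what you deferred is not a technical remainder but the theorem itself.

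Two concrete warnings if you push the plan through. First, with the pairing literally as in \eqref{09}, namely $2\<f,L_Nh\>_{\mu_N}$, the inner maximizer in the reversible case is $h^*=-f^*$ (complete the square), and a three-state check (complete graph, unit rates, $f=(1,s,0)$, $h=(b,a,0)$ with $\ms E^y_N=\{1\}$, $\ms E^x_N=\{2\}$) gives $b=-1$, $a=-s$ at the inner maximum for every $s$; in general the saddle has $h^*$ equal to \emph{minus} the equilibrium potential of the collapsed chain. So the value $b=1$ you intend to ``extract'' is tied to the sign convention of the Dirichlet principle in \cite{gl2} (pairing $\<f,(-L_N)h\>$), and an honest execution of your $g^*=f^*+\tfrac12 h^*$ computation with \eqref{09} as written will produce $b=-1$, i.e.\ the claimed formula up to sign; you must either adopt the convention of \cite{gl2} or track this sign, otherwise your argument ends in an apparent contradiction. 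Second, the envelope/Danskin step needs attainment and uniqueness of the inner maximizer and differentiability of $f\mapsto\sup_h$; this is fine on a finite state space (the setting of Proposition \ref{s02}, to which the paper reduces), but it is not automatic in the countable setting of Proposition \ref{s05} and would need justification in your direct approach.
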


This result is a particular case of Proposition \ref{s02} below.  We
conclude this section with two remarks on metastability. The first one
concerns the metastable behavior of the adjoint of processes which
exhibit a metastable behavior, while the second one points out that
condition ({\bf H0}) alone guarantees the metastable behavior of the
collapsation of the trace process.

\begin{proposition}
\label{s06}
Suppose that a sequence of Markov process $\{\eta^N_t : t\ge 0\}$
satisfies the hypotheses of Theorem \ref{teo1r}, that $\mu_N(\ms
E^x_N)$ converges to a strictly positive limit $m(x)$ for each $x\in
S$ and that Markov dynamics on $S$ determined by the rates $r$ has no
absorbing states.  Then, $m$ is a stationary probability measure for
this asymptotic dynamics, the adjoint process $\{\eta^{*,N}_t : t\ge
0\}$ also satisfies the assumptions of Theorem \ref{teo1r}, and the
limiting jump rates introduced in condition {\rm({\bf H0})} for the
adjoint process, denoted by $r_*(x,y)$, are the adjoint rates of the
asymptotic dynamics with respect to the measure $m$. In particular,
the adjoint process also exhibits a tunneling behavior.
\end{proposition}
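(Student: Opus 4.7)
The plan is to verify each of the three assertions in turn; the tunneling behavior of the adjoint will then be an immediate consequence of Theorem~\ref{teo1r} applied to $\{\eta^{*,N}_t\}$.

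For the stationarity of $m$, I would start from the fact that the restriction $\mu_N|_{\ms E_N}$ is invariant for the trace generator $R^{\ms E}_N$. Summing the invariance equation
\begin{equation*}
\sum_{\eta\in\ms E_N,\,\eta\ne\xi}\mu_N(\eta)\,R^{\ms E}_N(\eta,\xi)\;=\;\mu_N(\xi)\sum_{\zeta\in\ms E_N,\,\zeta\ne\xi}R^{\ms E}_N(\xi,\zeta)
\end{equation*}
over $\xi\in\ms E^y_N$ and cancelling the intra-$\ms E^y_N$ contributions on each side yields the exact balance
\begin{equation*}
\sum_{x\ne y}\mu_N(\ms E^x_N)\,r_N(\ms E^x_N,\ms E^y_N)\;=\;\mu_N(\ms E^y_N)\,r_N(\ms E^y_N,\breve{\ms E}^y_N)\;.
\end{equation*}
Multiplying by $\theta_N$ and invoking \textbf{(H0)} together with $\mu_N(\ms E^x_N)\to m(x)$ would produce $\sum_x m(x)\,r(x,y)=0$ for every $y$, which is the stationarity equation. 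That $m$ is in fact a probability then follows from \textbf{(H2)} (applicable at every $x$ since there are no absorbing states) and $m(x)>0$: together they force $\mu_N(\Delta_N)\to 0$, whence $\sum_x m(x)=1$.

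For the asymptotic adjoint rates, the key observation is that the trace of $\{\eta^{*,N}_t\}$ on $\ms E_N$ coincides with the $L^2(\mu_N|_{\ms E_N})$-adjoint of the trace of $\{\eta^N_t\}$. I would establish this commutation between trace and adjoint either by a short algebraic calculation using the Schur-complement representation of the trace generator, or by recognizing it as a reflection of the fact that restricting to $\ms E_N$-visits commutes with time-reversal. Denoting the resulting rates by $R^{\ms E,*}_N$, the identity reads $\mu_N(\eta)\,R^{\ms E,*}_N(\eta,\xi)=\mu_N(\xi)\,R^{\ms E}_N(\xi,\eta)$, and inserting this into \eqref{nv2} gives
\begin{equation*}
r^*_N(\ms E^x_N,\ms E^y_N)\;=\;\frac{\mu_N(\ms E^y_N)}{\mu_N(\ms E^x_N)}\,r_N(\ms E^y_N,\ms E^x_N)\;.
\end{equation*}
Passing to the limit after multiplication by $\theta_N$ produces $r_*(x,y)=(m(y)/m(x))\,r(y,x)$, which is precisely the $L^2(m)$-adjoint of the rates $r$; this simultaneously verifies \textbf{(H0)} for the adjoint with the same time-scale $\theta_N$.

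It remains to verify \textbf{(H1)} and \textbf{(H2)} for $\{\eta^{*,N}_t\}$. For \textbf{(H1)} I would invoke the identity $\Cap^*_N(A,B)=\Cap_N(A,B)$ established in \cite{gl2}, so that the capacities appearing in the hypothesis are unchanged upon passage to the adjoint. For \textbf{(H2)}, the adjoint has the same stationary measure $\mu_N$, and the formula $r_*(x,y)=(m(y)/m(x))\,r(y,x)$ combined with $m>0$ shows (via the stationarity of $m$ for $r$) that $x$ is absorbing under $r_*$ if and only if it is absorbing under $r$; the set of non-absorbing states, and the form of \textbf{(H2)} attached to it, are therefore identical for the two chains. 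With all hypotheses of Theorem~\ref{teo1r} verified for the adjoint, the tunneling behavior of $\{\eta^{*,N}_t\}$ with time-scale $\theta_N$ and limiting rates $r_*$ follows at once. The main technical obstacle is the commutation between trace and adjoint underlying the second step; the remainder of the argument is direct algebra or an appeal to the preceding results.
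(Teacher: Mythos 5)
Your proposal is correct, and most of it coincides with the paper's argument: condition ({\bf H1}) for the adjoint via $\Cap^*_N=\Cap_N$ and the unchanged stationary measure, condition ({\bf H0}) via the identity $\mu_N(\ms E^x_N)\,r^*_N(\ms E^x_N,\ms E^y_N)=\mu_N(\ms E^y_N)\,r_N(\ms E^y_N,\ms E^x_N)$, and the observation that $r_*$ has the same total exit rates as $r$ (hence no absorbing states, hence ({\bf H2}) transfers). The one place where you genuinely deviate is the stationarity of $m$: you obtain the exact finite-$N$ balance $\sum_{x\neq y}\mu_N(\ms E^x_N)r_N(\ms E^x_N,\ms E^y_N)=\mu_N(\ms E^y_N)r_N(\ms E^y_N,\breve{\ms E}^y_N)$ directly from the invariance of $\mu_N$ restricted to $\ms E_N$ for the trace generator, whereas the paper derives the same limiting balance by passing through the adjoint trace rates and the capacity identities \eqref{a03}, \eqref{02} together with $\Cap^*_N(\ms E^x_N,\breve{\ms E}^x_N)=\Cap_N(\ms E^x_N,\breve{\ms E}^x_N)$. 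Your route is slightly more elementary (no capacities needed for this step, and it also lets you address, via ({\bf H2}), why $m$ is a genuine probability, a point the paper leaves implicit), at the cost of invoking the invariance of the trace measure and checking the summability needed to cancel the intra-$\ms E^y_N$ terms, which is fine here thanks to \eqref{re1}. Finally, the ``main technical obstacle'' you flag — that trace and adjoint commute — is not really an extra obstacle in this framework: it is exactly what the paper extracts from the explicit rate formula of \cite[Proposition 6.1]{bl2}, namely $R^{*,\ms E_N}_N(\eta,\xi)=\lambda_N(\eta)\,\mb P^*_\eta[H^+_{\ms E_N}=H^+_\xi]$ combined with the path-reversal identity $M_N(\eta)\,\mb P^*_\eta[H^+_{\ms E_N}=H^+_\xi]=M_N(\xi)\,\mb P_\xi[H^+_{\ms E_N}=H^+_\eta]$; a Schur-complement computation would instead require some care in the countable-state, positive recurrent setting, so the probabilistic derivation is the safer one.
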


\begin{proof}
Denote by $r^*_N(\ms E^x_N, \ms E^y_N)$, $x\not = y\in S$, the mean
jump rates of the trace on $\ms E_N$ of the adjoint process.  By
definition of $m$ and $r$, $\sum_{y\not = x} m(y) r(y,x)$ is equal to
\begin{equation*}
\lim_{N\to\infty} \sum_{y\not = x} \mu_N(\ms E^y_N) \, \theta_N \, r_N(\ms E^y_N,
\ms E^x_N)
\;=\; \lim_{N\to\infty} \sum_{y\not = x} \mu_N(\ms E^x_N) \, \theta_N
\, r^*_N(\ms E^x_N, \ms E^y_N)
\end{equation*}
where the last identity follows from the explicit expressions for the
jump rates of the trace processes presented in \cite[Proposition
6.1]{bl2}. By \eqref{a03} and \eqref{02}, 
\begin{equation*}
\mu_N(\ms E^x_N) \sum_{y\not = x} r^*_N(\ms E^x_N, \ms
E^y_N) \;=\; \mu_N(\ms E^x_N) r^*_N(\ms E^x_N, \breve{\ms E}^x_N) \;=\;
\Cap^*_N(\ms E^x_N, \breve{\ms E}^x_N)\;,
\end{equation*}
where $\Cap^*_N$ represents the capacity with respect to the adjoint
process. By \cite[Lemma 2.3]{gl2} and by formula (2.4) in that
article, $\Cap^*_N(\ms E^x_N, \breve{\ms E}^x_N) = \Cap_N(\ms E^x_N,
\breve{\ms E}^x_N)$ so that
\begin{equation*}
\lim_{N\to\infty} \sum_{y\not = x} \mu_N(\ms E^y_N) \, \theta_N \, r_N(\ms E^y_N,
\ms E^x_N) \;=\; \lim_{N\to\infty} \mu_N(\ms E^x_N) \sum_{y\not = x}  
\theta_N \, r_N(\ms E^x_N, \ms E^y_N)\;,
\end{equation*}
that is, $\sum_{y\not = x} m(y) r(y,x) = m(x) \sum_{y\not = x}
r(x,y)$, proving that $m$ is a stationary measure.

To prove the second assertion of the proposition, note that condition
({\bf H1}) is satisfied because the measure and the capacities of the
adjoint process coincides with the measure and the capacities of the
original process. 

On the other hand, as we have seen above,
\begin{equation*}
\mu_N(\ms E^x_N) \, r^*_N(\ms E^x_N, \ms E^y_N) \;=\;
\mu_N(\ms E^y_N) \, r_N(\ms E^y_N, \ms E^x_N) 
\end{equation*}
for all $x\not = y$ in $S$. Hence, by condition ({\bf H0}) for the
original process and by assumption,
\begin{equation*}
\lim_{N\to\infty} \theta_N \, r^*_N(\ms E^x_N, \ms E^y_N)
\;=\; \frac {m(y)}{m(x)} r(y,x) \;=:\; r_*(x,y) \;.
\end{equation*}
This proves condition ({\bf H0}) for the adjoint process and 
shows that the dynamics on $S$ induced by the rates $r_*(x,y)$ has no
absorbing point since
\begin{equation*}
\sum_{y\not = x} r_*(x,y) \;=\; \sum_{y\not = x} r(x,y) \;>\; 0
\end{equation*}
by assumption. In particular, condition ({\bf H2}) for the adjoint
process is in force.
\end{proof}

Recall that we denote by $\{\eta^{\ms E_N}_t : t\ge 0\}$ the trace of
the process $\eta^N_t$ on $\ms E_N$. Denote furthermore by
$\{\eta^{S}_t : t\ge 0\}$ the Markov process on $S=\{1, \dots,
\kappa\}$ obtained from $\eta^{\ms E_N}_t$ by collapsing each set $\ms
E^x_N$ to the point $x$ \cite{gl2}. The jump rates of the Markov
process $\eta^{S}_t$, denoted by $r^S_N(x,y)$, are given by
\begin{equation*}
r^S_N(x,y) \;=\; r_N(\ms E^x_N , \ms E^y_N)\;, \quad x\not = y \in S\;. 
\end{equation*}
In particular, under assumption ({\bf H0}) the speeded up Markov
process $\eta^{S}_{t\theta_N}$ converges to the Markov process on $S$
characterized by the rates $r(x,y)$.

\section{Proof of Theorem \ref{teo1r}}
\label{sec1}

We prove in this section the main result of the article.  Given two
disjoint subsets $A,B\subset E_N$, let
\begin{equation*}
f_{AB}(\eta) \;:=\; 
\mb P_{\eta}\big[\, H_{A} < H_B \,\big]\;, \quad
f_{AB}^*(\eta) \;:=\; 
\mb P^*_{\eta}\big[\, H_{A} < H_B \,\big]\;.
\end{equation*}
In addition, for point $\bs \eta=(\eta^N : N\ge 1)$, $\bs \xi=(\xi^N :
N\ge 1)$ in $\ms E$, $\eta^N\not=\xi^N$, and a set $\ms A = (A_N:
N\ge 1)$, set 
\begin{equation*}
f^*_N({\bs \eta},{\bs \xi}) \,=\, f^*_{\{\eta^N\}\{\xi^N\}}
\;,\;\;
f^*_N({\bs \eta},\ms A) \,=\, f^*_{\{\eta^N\} A_N}
\;,\;\;
\Cap_N({\bs \eta},{\bs \xi})=\Cap_N(\{\eta^N\},\{\xi^N\})\;,
\end{equation*}
and a similar notation without the upper index $*$.

For two subsets $A$, $B$ of $E_N$, denote by $H_A(B)$ the time the
process $\eta^N_t$ spent on the set $B$ before hitting the set $A$:
\begin{equation*}
H_A(B) \;=\; \int_0^{H_A} \mb 1\{\eta^N_s \in B\}\, ds\;.
\end{equation*}

Fix $x\in S$. By the definition of $H_{\xi^N_x}(\ms E^x_N)$ given just
above, by \eqref{g02} below, and by identity \eqref{a03} for the
capacity,
\begin{equation*}
\begin{split}
& {\mb E}_{\eta^N} \Big[\int_{0}^{H_{\mb \xi_x}} 
R_N(\eta^N_s, \breve{\ms E}^x_N)\,{\bf 1}\{\eta^N_s\in  \ms E^x_N\} 
\,ds \Big] 
\;=\; \frac{\langle \, R_N(\,\cdot\,, \breve{\ms E}^x_N) \,  
{\bf 1}\{\ms E^x_N\} \,,\, f^*_N({\bs \eta},{\bs \xi}) \, 
\rangle_{\mu_N}}{\Cap_N( {\bs \eta} , {\bs \xi })}\;, \\
&\quad
r_N(\ms E^x_N,\breve{\ms E}^x_N) \, {\mb E}_{\eta^N}[H_{\xi^N_x}(\ms E^x_N)] 
\;=\; \frac{ \Cap_N(\ms E^x_N,\breve{\ms E}^x_N) }
{ \mu_N(\ms E^x_N) \, }\,
\frac{  \langle \, {\bf 1}\{\ms E^x_N\} \,,\, 
f^*_N({\bs \eta},{\bs \xi}) \,\rangle_{\mu_N} }
{ \Cap_N({\bs \eta},{\bs \xi}) }  \;\cdot
\end{split}
\end{equation*} 
Since $0 \le f^*_N({\bs \eta},{\bs \xi}) \le 1$, and since $\langle \,
R_N(\,\cdot\,, \breve{\ms E}^x_N) \, {\bf 1}\{\ms E^x_N\} \,
\rangle_{\mu_N} = \mu_N(\ms E^x_N) r_N(\ms E^x_N,\breve{\ms E}^x_N)$,
by identity \eqref{a03} again, we obtain the following estimates which
will be used repeatedly below
\begin{equation}
\label{g03}
\begin{split}
& {\mb E}_{\eta^N} \Big[\int_{0}^{H_{\mb \xi_x}} 
R_N(\eta^N_s, \breve{\ms E}^x_N)\,{\bf 1}\{\eta^N_s\in  \ms E^x_N\} 
\,ds \Big] 
\,\le \, \frac{\Cap_N(\ms E^x_N,\breve{\ms E}^x_N)} 
{\Cap_N({\bs \xi}_x)}\;,  \\
&\quad
r_N(\ms E^x_N,\breve{\ms E}^x_N) \, {\mb E}_{\eta^N}[H_{\xi^N_x}(\ms
E^x_N)] 
\,\le \,  \frac{\Cap_N(\ms E^x_N,\breve{\ms E}^x_N)} {\Cap_N({\bs
    \xi}_x)}\; \cdot
\end{split}
\end{equation}

Lemma \ref{s04} below is Proposition 5.10 of \cite{bl2}. The proof
needs some modifications to take into account the lack of
reversibility.  Let $R^{*,\ms E_N}_N : \ms E_N \times \ms E_N\to \bb
R_+$ be the transition rates of the trace on $\ms E_N$ of the adjoint
process $\eta^{*,N}_t$.  Denote by $r^*_N(\ms E^x_N,\ms E^y_N)$ the
mean rate at which this trace process jumps from $\ms E^x_N$ to $\ms
E^y_N$:
\begin{equation*}
r^*_N(\ms E^x_N,\ms E^y_N) \; := \; \frac{1}{\mu_N(\ms E^x_N)}
\sum_{\eta\in\ms E^x_N} \mu_N(\eta) 
\sum_{\xi\in\ms E^y_N} R^{*,\ms  E_N}_N(\eta,\xi) 
\;,
\end{equation*}
and let $R^{*,\ms  E_N}_N(\eta, \ms E^y_N) = \sum_{\xi\in\ms E^y_N}
R^{*,\ms  E_N}_N(\eta,\xi)$, $y\not = x$,
$$
r^*_N(\ms E^x_N,\breve{\ms E}^x_N) \,=\, \sum_{y\not = x} 
r^*_N(\ms E^x_N,\ms E^y_N)\,.
$$

\begin{lemma}
\label{s04} 
Fix $x\in S$.  Assume condition {\rm ({\bf H1})} for some point
$\bs \xi_x=(\xi^{N}_x : N\ge 1)$ in $\ms E^x$. Then, for every point
$\bs\zeta=(\zeta^N:N\ge 1)$ in $\ms E^x$,
\begin{equation}
\label{ce}
\lim_{N\to \infty} \frac{ \Cap_N(\ms E^x_N, \breve{\ms E}^x_N) }
{\Cap_N(\zeta^N ,  \breve{\ms E}^x_N ) } \;=\; 1\;,
\end{equation}
and
\begin{equation}
\label{vze}
\lim_{N\to \infty}\inf_{\eta\in \ms E^x} 
{\bf P}^*_{\eta}[\, H_{\zeta^N} < H_{\breve{\ms E}^x_N} \,] \;=\; 1
\;, \quad
\lim_{N\to \infty}\inf_{\eta\in \ms E^x} 
{\bf P}_{\eta}[\, H_{\zeta^N} < H_{\breve{\ms E}^x_N} \,] \;=\; 1 \;.
\end{equation}
\end{lemma}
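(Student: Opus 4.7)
The plan is to follow the reversible argument of \cite[Proposition 5.10]{bl2} and adapt it to the nonreversible setting. Two observations make the adaptation possible. First, by \cite[Lemma 2.3]{gl2} the capacity is invariant under the adjoint, $\Cap^*_N = \Cap_N$, so that condition ({\bf H1}) automatically holds for the adjoint chain. Second, the estimate \eqref{g03} is derived from identities that remain valid with $\mb E$ and $r_N$ replaced by $\mb E^*$ and $r^*_N$; an analogous bound is thus at one's disposal for the adjoint process. The idea is to first prove both \eqref{vze} and \eqref{ce} when $\zeta^N$ is the distinguished point $\xi_x^N$ of ({\bf H1}), and then to bootstrap to arbitrary $\zeta^N$.

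For \eqref{vze} at $\xi_x^N$, I would combine \eqref{g03} with ({\bf H1}) to obtain
\begin{equation*}
r_N(\ms E^x_N,\breve{\ms E}^x_N)\, \sup_{\eta \in \ms E^x_N}
{\mb E}_{\eta}\bigl[\, H_{\xi_x^N}(\ms E^x_N) \,\bigr] \;\longrightarrow\; 0,
\end{equation*}
together with the same statement for the adjoint. Since $1/r_N(\ms E^x_N,\breve{\ms E}^x_N)$ is the natural time-scale on which the trace chain $\eta^{\ms E_N}_t$ exits $\ms E^x_N$, a Markov-inequality argument at the level of the trace process converts this into the uniform estimate ${\mb P}_{\eta}[\, H_{\xi^N_x} < H_{\breve{\ms E}^x_N} \,] \to 1$, and likewise under $\mb P^*$. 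To pass to \eqref{ce} at $\xi_x^N$, I observe that monotonicity of the capacity in its first argument already delivers $\Cap_N(\xi_x^N,\breve{\ms E}^x_N) \le \Cap_N(\ms E^x_N,\breve{\ms E}^x_N)$, so only the reverse bound is missing. I would then take the adjoint equilibrium potential $V(\eta) = {\mb P}^*_{\eta}[\, H_{\xi^N_x} < H_{\breve{\ms E}^x_N} \,]$, which vanishes on $\breve{\ms E}^x_N$ and, by what has just been proved, is uniformly close to $1$ on $\ms E^x_N$; after setting it equal to $1$ on all of $\ms E^x_N$ to obtain a compatible pair of test functions for the double variational formula of \cite{gl2}, one derives $\Cap_N(\ms E^x_N,\breve{\ms E}^x_N) \le (1+o(1))\Cap_N(\xi_x^N,\breve{\ms E}^x_N)$.

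Finally, to promote the statements from $\xi_x^N$ to an arbitrary $\zeta^N \in \ms E^x_N$: once \eqref{ce} is established at $\xi_x^N$, one has $\Cap_N(\xi_x^N,\breve{\ms E}^x_N)/\Cap_N(\bs \xi_x) \to 0$, and a strong Markov decomposition based on the hitting estimates already proved shows that from $\xi_x^N$ the process reaches $\zeta^N$ before $\breve{\ms E}^x_N$ with probability tending to one, transferring ({\bf H1}) from $\bs \xi_x$ to $\bs \zeta$; the preceding steps can then be rerun with $\zeta^N$ in place of $\xi_x^N$. The hard point is the derivation of \eqref{ce}: because the nonreversible capacity is a double variational object, one cannot simply plug a single equilibrium potential into a Dirichlet form as in the reversible case, and it is precisely the identity $\Cap^*_N = \Cap_N$ together with the availability of the adjoint equilibrium potential $V$ that supplies the compatible pair of test functions demanded by the Dirichlet principle of \cite{gl2}.
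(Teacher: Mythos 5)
Your general orientation (pass to the adjoint via $\Cap^*_N=\Cap_N$ and use \eqref{g03}-type bounds) agrees with the paper, but the two steps you yourself identify as the core of the argument have genuine gaps. For \eqref{vze}: from $r_N(\ms E^x_N,\breve{\ms E}^x_N)\,\sup_\eta \mb E_\eta[H_{\xi^N_x}(\ms E^x_N)]\to 0$, a Markov inequality only says the chain spends little time in $\ms E^x_N$ before hitting $\xi^N_x$; it does not preclude the chain from exiting to $\breve{\ms E}^x_N$ first, because $r_N(\ms E^x_N,\breve{\ms E}^x_N)$ is a $\mu_N$-averaged rate and nothing a priori excludes starting points from which the exit occurs on a much shorter scale than $1/r_N$. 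Ruling this out is precisely the content of \cite[Proposition 5.1]{bl2} (uniform asymptotics $r_N\,\mb E_\eta[H_{\breve{\ms E}^x_N}(\ms E^x_N)]\to 1$ and asymptotic exponentiality), which the paper verifies for both the adjoint and the original chain; alternatively one can use the \emph{first} bound of \eqref{g03}, in its trace-rate form, since the expected integrated jump rate towards $\breve{\ms E}^x_N$ before $H_{\xi^N_x}$ dominates $\mb P_\eta[H_{\breve{\ms E}^x_N}<H_{\xi^N_x}]$ by a compensator argument. Your sketch invokes neither, so the ``natural time-scale'' step is unjustified. Moreover the bootstrap to a general $\bs\zeta$ is not available as stated: ``transferring ({\bf H1}) from $\bs\xi_x$ to $\bs\zeta$'' would require a lower bound on $\inf_\eta\Cap_N(\{\eta\},\{\zeta^N\})$ that neither the hypotheses nor the conclusion supply. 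The paper treats an arbitrary $\bs\zeta$ directly, via the occupation-time triangle bound $\mb E^*_\eta[H_{\zeta^N}(\ms E^x_N)]\le \mb E^*_\eta[H_{\xi^N_x}(\ms E^x_N)]+\mb E^*_{\xi^N_x}[H_{\zeta^N}(\ms E^x_N)]\le 2\mu_N(\ms E^x_N)/\Cap_N(\bs\xi_x)$ (using \eqref{g02} and $\Cap_N(A,B)=\Cap_N(B,A)$), and then by checking conditions (i) and (ii) of \cite[Proposition 4.2]{bl2}, i.e.\ that $(\ms E^x,\ms E^x,\bs\zeta)$ is a valley for the trace of the adjoint (resp.\ original) chain.

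For \eqref{ce} the proposed variational step does not work: in the inf--sup principle of \cite{gl2}, inserting one ``compatible pair'' $(f,h)$ gives no bound on the capacity at all — for a fixed admissible $f$ a particular $h$ only bounds the inner supremum from below, so you get neither an upper nor a lower bound on $\inf_f\sup_h$. To obtain $\Cap_N(\ms E^x_N,\breve{\ms E}^x_N)\le (1+o(1))\,\Cap_N(\zeta^N,\breve{\ms E}^x_N)$ along your route you would have to control $\sup_h\{2\<f,L_Nh\>_{\mu_N}-\<h,(-L_N)h\>_{\mu_N}\}$ for your chosen $f$, an $H_{-1}$-type quantity that the uniform closeness of the adjoint equilibrium potential to $1$ on $\ms E^x_N$ does not control; the reversible truncation trick only bounds the symmetric capacity, which by \eqref{a02} sits on the wrong side of $\Cap_N$. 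The paper bypasses variational formulas here: it rewrites the conclusion (5.1) of \cite[Proposition 5.1]{bl2} for the chain started at $\zeta^N$, using \eqref{g02} and \eqref{a03}, as $\langle \mb 1\{\ms E^x_N\},f^*_N(\bs\zeta,\breve{\ms E}^x)\rangle_{\mu_N}\,\Cap_N(\ms E^x_N,\breve{\ms E}^x_N)\big/\big(\mu_N(\ms E^x_N)\,\Cap_N(\zeta^N,\breve{\ms E}^x_N)\big)\to 1$, and then uses \eqref{vze} for the adjoint to replace the scalar product by $\mu_N(\ms E^x_N)$. Some substitute for this exit-time input is needed in your argument; without it, both \eqref{vze} for general $\bs\zeta$ and \eqref{ce} remain unproved.
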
 

\begin{proof}
Fix $x\in S$ and some point $\bs \xi_x=(\xi^{N}_x : N\ge 1)$ in $\ms
E^x$ for which ({\bf H1}) holds.  In view of \eqref{a02}, by
\eqref{g03} for the adjoint process and by assumption ({\bf H1}), for
every point $\bs \eta =(\eta^{N} : N\ge 1)$ in $\ms E^x$,
\begin{equation*}
\begin{split}
&\lim_{N\to\infty} {\mb E}^*_{\eta^N} \Big[ \int_{0}^{H_{\xi^N_x}} 
R^{*,\ms E_N}_N(\eta^{*,N}_s, \breve{\ms E}^x_N)\,{\mb 1}
\{\eta^{*,N}_s\in  \ms E^x_N\} \,ds \Big] \,= \, 0\;,  \\
&\quad \lim_{N\to\infty}
r^*_N(\ms E^x_N,\breve{\ms E}^x_N) \, {\mb E}^*_{\eta^N}
[H_{\xi^N_x}(\ms E^x_N)] \,=\, 0\;.
\end{split}
\end{equation*}
The assumptions of \cite[Proposition 5.1]{bl2} are thus fulfilled for
the adjoint process and with $\ms W=\ms E^x$, $\ms B^c = \breve{\ms
  E}^x$ and the point $\bs \xi_x=(\xi^{N}_x : N\ge 1)$. 

Fix an arbitrary point ${\bs \zeta} = (\zeta^N : N\ge 1)$ in $\ms
E^x$. We show that conditions (i) and (ii) of \cite[Proposition
4.2]{bl2} are in force for the adjoint process and with $\ms W=\ms
E^x$, $\ms B^c = \breve{\ms E}^x$, $\bs \xi = \bs \zeta$.

Fix a point ${\bs \eta}=(\eta^N : N\ge 1)$ in $\ms E^x$. By
\eqref{g02} applied to $\eta^N$, $g= \mb 1\{\ms E^x_N\}$,
$B=\{\xi^N\}$, and to $\xi^N$, $g= \mb 1\{\ms E^x_N\}$,
$B=\{\zeta^N\}$,
\begin{eqnarray*}
{\bf E}^*_{\eta^N} [\, H_{\zeta^N}(\ms E^x_N) \,] &\le & 
{\bf E}^*_{\eta^N} [\, H_{\xi^N}(\ms E^x_N) \,] \,+\, 
{\bf E}^*_{\xi^N} [\, H_{\zeta^N}(\ms E^x_N) \,] \\
&\le&  \frac{\mu_N(\ms E^x_N)}{\Cap_N(\eta_N , \xi_N)} 
{\bs 1}\{ \eta^N\not = \xi^N\} \,+\, 
\frac {\mu_N(\ms E^x_N)}{\Cap_N(\zeta_N, \xi_N)} 
{\bs 1}\{ \zeta^N\not = \xi^N\} \\
&\le& \frac{ 2 \,\mu_N(\ms E^x_N) }{ \Cap_N(\xi_N) }\; \cdot
\end{eqnarray*}
It follows from this estimate, identities (\ref{a03}), \eqref{a02} and
assumption ({\bf H1}), that
\begin{equation*}
\lim_{N\to\infty} r^*_N(\ms E^x_N,\breve{\ms E}^x_N) \, {\bf E}^*_{\eta^N} 
[\, H_{\zeta^N}(\ms E^x_N) \,] \;=\; 0\,.
\end{equation*}
Therefore, applying the assertion (5.1) of \cite[Proposition
5.1]{bl2}, to $\ms W=\ms E^x$, $\ms B^c = \breve{\ms E}^x$,
\begin{equation*}
\lim_{N\to\infty} \frac {{\bf E}^*_{\eta^N} [\, H_{\zeta^N}(\ms E^x_N)
  \,]} {{\bf E}^*_{\eta^N}[H_{\breve{\ms E}^x_N}(\ms E^x_N)]} \;=\;
\lim_{N\to\infty} r^*_N(\ms E^x_N,\breve{\ms E}^x_N)\,
{\bf E}^*_{\eta^N} [\, H_{\zeta_N}(\ms E^x_N)  \,] \;=\; 0 \;.
\end{equation*}
This result shows that condition (i) of \cite[Proposition 4.2]{bl2}
holds.

Fix a point ${\bs \eta}=(\eta^N : N\ge 1)$ in $\ms E^x$. By
\cite[Proposition 5.1]{bl2}, under $\mb P^*_{\eta^N}$, $H_{\breve{\ms
    E}^x_N}(\ms E^x_N)/ {\bf E}^*_{\eta^N}[H_{\breve{\ms E}^x_N}(\ms
E^x_N)]$ converges to a mean one exponential distribution, which is
condition (ii) of \cite[Proposition 4.2]{bl2}.

By items (i) and (ii) of Proposition 4.2 we conclude that $(\ms
E^x,\ms E^x,{\bs \zeta})$ is a valley for the trace on $\ms E_N$ of
the adjoint process. Hence, for any point ${\bs \eta}=(\eta^N : N\ge
1)$ in $\ms E^x$
$$
\lim_{N\to\infty}{\bf P}^*_{\eta^N}[\,H_{\zeta^N}(\ms E_N) 
< H_{\breve{\ms E}^x_N}(\ms E_N) \,]\,=\,1\,,
$$
which implies condition {\bf (V1)} for the triple $(\ms E^x, \ms E,
{\bs \zeta})$ because clearly $\{H_{\zeta^N}(\ms E_N) < H_{\breve{\ms
    E}^x_N}(\ms E_N)\} \subseteq \{H_{\zeta^N} < H_{\breve{\ms
    E}^x_N}\}\,$ ${\bf P}^*_{\eta^N}$-a.s., proving the first
assertion in (\ref{vze}). The proof of the second one is exactly the
same replacing the adjoint process by the original one.

The arguments presented in the first part of the proof, applied to the
original process instead of the adjoint process, show that the
assumptions of \cite[Proposition 5.1]{bl2} are in force for the
original process and with $\ms W=\ms E^x$, $\ms B^c = \breve{\ms E}^x$
and the point $\bs \xi_x=(\xi^{N}_x : N\ge 1)$. Fix a point ${\bs
  \zeta}=(\zeta^N : N\ge 1)$ in $\ms E^x$. By \eqref{g02} with
$A=\{\zeta^N\}$, $B = \breve{\ms E}^x_N$ and $g=\mb 1\{\ms E^x_N\}$,
and by identity (\ref{a03}), the limit (5.1) in \cite[Proposition
5.1]{bl2} can be re-written as
$$
\lim_{N\to \infty} \frac{\langle {\bs 1}\{\ms E^x_N\} , 
f^*_N(\bs \zeta, \breve{\ms E}^x) \rangle_{\mu_N}\, 
\Cap_N(\ms E^x_N, \breve{\ms E}^x_N)}{\mu_N(\ms E^x_N) \, 
\Cap_N(\zeta^N, \breve{\ms E}^x_N)} 
\;=\; 1\,.
$$
By (\ref{vze}), the infimum of $f^*_N( {\bs \zeta}, \breve{\ms E}^x)$
over $\ms E^x_N$ converges to $1$ as $N\uparrow \infty$. Therefore,
(\ref{ce}) follows from this observation and the previous identity.
\end{proof}

\begin{proof}[Proof of Theorem \ref{teo1r}]
The proof of this result is similar to the one of \cite[Theorem
2.7]{bl2}.  We first need to check that all assumptions of
\cite[Theorem 2.4]{bl2} are in force.  

By \eqref{g03}, conditions ({\bf C1}) and ({\bf C2}) in \cite[Theorem
2.4]{bl2} follow from the stronger condition ({\bf H1}). Therefore, by
\cite[Theorem 2.4]{bl2}, properties $({\bf M1})$ and $({\bf M2})$ of
tunneling hold on the time-scale $\bs \theta$, with metastates $\{\ms
E^x : x\in S\}$, metapoints $\{\bs \xi_x : x\in S\}$ and asymptotic
Markov dynamics characterized by the rates $r(x,y)$, $x,y\in S$.  The
estimate \eqref{vze} shows that we can replace the metapoint $\bs
\xi_x$ by any point $\bs \zeta_x$ in $\ms E^x$.

We turn to condition ({\bf M3}). Fix $x\in S$ and $t >0$. Decomposing
the time interval $[0, t\theta_N]$ according to the successive visits
of the trace process $\eta^{\ms E_N}_t$ to the metastates $\ms E^z_N$,
$z\in S$, we see that to prove condition ({\bf M3}) for all $x\in S$,
it is enough to show that this condition is in force for each
absorbing state $x$ for the Markov dynamics on $S$ determined by the
rates $r$ and that
\begin{equation*}
\limsup_{N \to \infty} \sup_{\eta\in \ms E^{x}_N} \frac{1}{\theta_N} 
{\bf E}_{\eta} \big[\,  H_{\breve{\ms E}^{x}_N}(\Delta_N) \,\big] 
\,=\, 0
\end{equation*}
holds for each non-absorbing state $x$. 

Fix a non-absorbing state $x\in S$. By \eqref{g02} and since
$f^*_{\eta \breve{\ms E}^{x}_N}$ is bounded by $1$, the expectation is
less than or equal to $\mu_N(\Delta_N)/\Cap_N(\eta, \breve{\ms
  E}^{x}_N)$. By Lemma \ref{s04}, we may replace asymptotically $\eta$
by $\ms E^x_N$ in the previous ratio. Since $\Cap (\ms E^x_N,
\breve{\ms E}^{x}_N)=\mu_N(\ms E^x_N) r_N(\ms E^x_N,\breve{\ms
  E}^{x}_N)$, we have shown that
\begin{equation*}
\limsup_{N \to \infty} \sup_{\eta\in \ms E^{x}_N}
\frac{1}{\theta_N} {\bf E}_{\eta} \big[\,  
H_{\breve{\ms E}^{x}_N}({\Delta_N}) \,\big] \;\le\;
\limsup_{N \to \infty} \frac 1{\theta_N \, r_N(\ms E^x_N, \breve{\ms E}^x_N)}
\, \frac{ \mu_N(\Delta_N) }{ \mu_N(\ms E^{x}_N) }\; \cdot
\end{equation*}
As $x$ is a non-absorbing point, by assumptions ({\bf H0}), ({\bf
  H2}), the right hand side vanishes, which concludes the proof of the
theorem.
\end{proof}

\section{A formula for the average jump rate}

Consider an irreducible chain $\{\eta(t) : t\ge 0\}$ on a countable
state space $E$. Let $B\subset F$ be two non-empty subsets of
$E$. Denote by $\{\eta^C (t) : t\ge 0\}$ the Markov process on
$(E\setminus B) \cup \{\mf d\}$, where $\mf d \not\in E$ is an extra
pointed added to represent the collapsed set $B$, obtained from $\eta
(t)$ by collapsing the set $B$ to a point $\mf d$ \cite{gl2}, and
denote by $\{\eta^T (t) : t\ge 0\}$ the trace of the process $\eta(t)$
on the set $F$ \cite{bl2}. Denote furthermore by $\{\eta^{TC} (t) :
t\ge 0\}$ the trace of the Markov process $\eta^C(t)$ on the set $G =
(F\setminus B) \cup \{\mf d\}$, and by $\{\eta^{CT} (t) : t\ge 0\}$
the process $\eta^T(t)$ where the set $B$ has been collapsed to a
point $\mf d$. Note that the state space of both processes is $G =
(F\setminus B) \cup \{\mf d\}$

\begin{lemma}
\label{s03}
The processes $\{\eta^{TC} (t) : t\ge 0\}$ and $\{\eta^{CT} (t) : t\ge
0\}$ have the same law.
\end{lemma}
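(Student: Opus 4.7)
Since both processes are Markov chains on the common state space $G=(F\setminus B)\cup\{\mf d\}$ and inherit the same initial distribution from $\eta$ under the identification of $B$ with $\mf d$, it suffices to show that they share the same $Q$-matrix. My plan is to write down the jump rates on both sides using the trace formula of \cite{bl2} and the collapse construction of \cite{gl2}, and to compare them in the three cases $a,b\in F\setminus B$; $a\in F\setminus B$, $b=\mf d$; and $a=\mf d$, $b\in F\setminus B$.

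Denote the rates of the trace $\eta^T$ on $F$ by
\begin{equation*}
R^F(\eta,\xi) \,=\, R(\eta,\xi) \,+\, \sum_{\zeta\in E\setminus F} R(\eta,\zeta)\, \mb P_\zeta[\eta(H_F)=\xi]\,,
\end{equation*}
and denote by $R^C$ the rates of $\eta^C$. Collapsing $B$ inside $\eta^T$ then produces, for $a,b\in F\setminus B$,
\begin{equation*}
R^{CT}(a,b) \,=\, R^F(a,b)\,,\quad R^{CT}(a,\mf d) \,=\, \sum_{\xi\in B} R^F(a,\xi)\,,\quad R^{CT}(\mf d,b) \,=\, \frac{1}{\mu(B)}\sum_{\zeta\in B} \mu(\zeta)\, R^F(\zeta,b)\,.
\end{equation*}
To compute $R^{TC}$, I would apply the trace formula to $\eta^C$ on $G$. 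The key observation is that, started from any $c\in E\setminus F\subseteq E\setminus B$, the processes $\eta$ and $\eta^C$ admit a trajectorial coupling (by identifying $B$ with $\mf d$) under which they coincide up to the first visit of $\eta$ to $B$, which is the same instant as the first visit of $\eta^C$ to $\mf d$. Because $F=(F\setminus B)\cup B$ and $G=(F\setminus B)\cup\{\mf d\}$, the hitting times $H_F$ and $H^C_G$ match under this coupling, yielding the harmonic-measure identities
\begin{equation*}
\mb P^C_c[\eta^C(H^C_G)=b] \,=\, \mb P_c[\eta(H_F)=b]\,, \qquad \mb P^C_c[\eta^C(H^C_G)=\mf d] \,=\, \mb P_c[\eta(H_F)\in B]
\end{equation*}
for $b\in F\setminus B$ and $c\in E\setminus F$.

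Plugging these identities together with the explicit form of $R^C$ into the trace formula recovers exactly $R^F(a,b)$, $\sum_{\xi\in B} R^F(a,\xi)$, and, after factoring out the $\mu$-weighted average defining $R^C(\mf d,\cdot)$, the quantity $\mu(B)^{-1}\sum_{\zeta\in B}\mu(\zeta)R^F(\zeta,b)$; these match $R^{CT}$ case by case. The only nontrivial step is the harmonic-measure identity above: it relies on the fact that the collapsed chain makes no jump out of $\mf d$ before first reaching $\mf d$, so the coupling cannot develop any spurious discrepancy between $\eta$ and $\eta^C$ before time $H_F = H^C_G$, and in particular contributions from hypothetical ``re-entries'' of $\eta^C$ into $B$ during an excursion outside $G$ do not arise.
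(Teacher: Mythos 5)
Your plan is correct and takes essentially the same route as the paper: reduce the lemma to an equality of jump rates on $G=(F\setminus B)\cup\{\mf d\}$, then combine the trace-rate formula of \cite[Proposition 6.1]{bl2} with the collapsing identities (3.3) of \cite{gl2}; your harmonic-measure/coupling identity $\mb P^C_c[\eta^C(H^C_G)=b]=\mb P_c[\eta(H_F)=b]$ for $c\in E\setminus F$ is exactly the content of the paper's path-sum argument, since paths avoiding $F$ carry the same weights under $p$ and $p_C$. The only difference is presentational: you spell out all three cases (the paper treats $R_{TC}(\mf d,\cdot)$ and leaves the rest to the reader), and you should just note explicitly that the collapse inside $\eta^T$ is taken with respect to its invariant measure $\mu(\cdot)/\mu(F)$, whose normalization cancels in $R^{CT}(\mf d,\cdot)$.
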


\begin{proof}
Both processes are Markov processes on $G= (F\setminus B) \cup \{\mf
d\}$. To prove the assertion it is therefore enough to check that the
jump rates are equal. 

Denote by $R_C$, $R_T$, $R_{CT}$, $R_{TC}$ the jump rates of the
Markov processes $\eta^C(t)$, $\eta^T(t)$, $\eta^{CT}(t)$,
$\eta^{TC}(t)$, respectively. Let $\eta\in F\setminus B$. We show that
$R_{TC}(\mf d, \eta) = R_{CT}(\mf d, \eta)$ and leave the other
identities to the reader.

By \cite[Proposition 6.1]{bl2}, $R_{TC}(\mf d, \eta) = \lambda_C(\mf
d) \mb P_{\mf d}^C[ H^+_{G} = H^+_\eta]$, where $\mb P_{\mf d}^C$
stands for the probability measure on the path space $D(\bb R_+, 
(E\setminus B) \cup \{\mf d\})$ induced by the Markov process
$\eta^C(t)$ starting from $\mf d$. The probability $\mb P_{\mf d}^C[
H^+_{G} = H^+_\eta]$ can be written as the sum of the probabilities of
all paths connecting $\mf d$ to $\eta$ through points which do not
belong to $G$:
\begin{equation*}
\mb P_{\mf d}^C \big[ H^+_{G} = H^+_\eta \big] \;=\;
\sum_{\gamma} p_C(\mf d, \xi_1) \cdots p_C(\xi_{n-1}, \eta)\;,
\end{equation*}
where the sum is carried over all paths $\gamma = (\xi_0=\mf d, \xi_1,
\dots, \xi_{n-1}, \xi_n = \eta)$ such that $\xi_j\not\in G$, $1\le
j\le n-1$ or, equivalently, such that $\xi_j\in E\setminus F$ . In
this formula, $p_C$ stands for the jump probabilities of the collapsed
chain. Hence,
\begin{equation*}
R_{TC}(\mf d, \eta) \;=\; 
\sum_{\gamma} R_C(\mf d, \xi_1) p_C(\xi_1, \xi_2) \cdots
p_C(\xi_{n-1}, \eta)\;. 
\end{equation*}
By equation (3.3) in \cite{gl2}, $p_C(\xi, \zeta) = p(\xi,\zeta)$ if
$\xi$, $\zeta \in E\setminus B$ and $R_C(\mf d, \xi_1) = \mu(B)^{-1}
\sum_{\zeta\in B} \mu(\zeta) R(\zeta,\xi_1)$. The previous expression
is thus equal to
\begin{equation*}
\begin{split}
R_{TC}(\mf d, \eta) \; & =\; \frac 1{\mu(B)} \sum_{\zeta\in B} \mu(\zeta)
\sum_{\gamma} R(\zeta, \xi_1) p(\xi_1, \xi_2) \cdots
p(\xi_{n-1}, \eta) \\
\; & =\; \frac 1{\mu(B)} \sum_{\zeta\in B} \mu(\zeta) \lambda(\zeta)
\mb P_\zeta \big[ H^+_{F} = H^+_\eta \big] \;. 
\end{split}
\end{equation*}

On the other hand, by equation (3.3) in \cite{gl2} and Proposition 6.1
in \cite{bl2}, for $\eta\in F\setminus B$,
\begin{equation*}
R_{CT}(\mf d, \eta) \; =\; \frac 1{\mu(B)} \sum_{\zeta\in B}
\mu(\zeta) R_T(\zeta, \eta) \; =\;
\frac 1{\mu(B)} \sum_{\zeta\in B} \mu(\zeta) \lambda(\zeta)
\mb P_\zeta \big[ H^+_{F} = H^+_\eta \big]\;.
\end{equation*}
This proves the lemma.
\end{proof}

In the next result we consider an irreducible Markov process $\{\eta
(t) : t\ge 0\}$ on a finite state space $E$. Let $A_0$, $A_1$, $B$ be
three disjoint subsets of $E$ and let $R_T$ denote the jump rates of
$\eta^T(t)$, the trace of the process $\eta(t)$ on the set $F=A_0\cup
A_1\cup B$. We denote by $r_T(B, A_i)$, $i=0$, $1$, the average rate
at which the trace process jumps from $B$ to $A_i$:
\begin{equation*}
r_T(B, A_i) \;=\; \frac 1{\mu(B)} \sum_{\eta\in B} \mu(\eta)
\sum_{\xi\in A_i} R_T (\eta,\xi)
\;=\; \frac 1{\mu(B)} \sum_{\eta\in B} \mu(\eta) R_T (\eta,A_i)\;.
\end{equation*}

\begin{proposition}
\label{s02}
Consider the variational problem
\begin{equation}
\label{03}
\inf_f \sup_h \big\{ 2\< f, Lh \>_\mu - \< h, (-L) h \>_\mu \big\},
\end{equation}
where the infimum is carried over all function $f:E\to \bb R$ which
are constant at each set $A_0$, $A_1$, $B$, equal to $1$ at $A_1$ and
equal to $0$ at $A_0$, while the supremum is carried over all
functions $h$ which are equal to $0$ at $A_0$ and which are constant
over $A_1$, $B$, with possibly different values at each set. Then, the
optimal function $h$ associated to the optimal function $f$ is such
that
\begin{equation*}
h(B) \;=\; \frac {r_T(B,A_1) } {r_T(B,A_0) + r_T(B,A_1)}\;\cdot
\end{equation*}
\end{proposition}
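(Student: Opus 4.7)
The plan is to solve \eqref{03} by two successive reductions: first collapsing $A_0$, $A_1$, $B$ each to a single point, then tracing the resulting chain onto these three points, and finally solving the resulting three-state saddle problem explicitly.

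For the first reduction, the key observation is that both $f$ and $h$ in \eqref{03} are constant on each of $A_0$, $A_1$, $B$. A direct calculation using the jump rates of the collapsed chain from \cite[Eq.~(3.3)]{gl2} shows that, whenever $g_1, g_2 : E \to \bb R$ are both constant on a subset $C \subset E$, the pairing $\langle g_1, L g_2 \rangle_\mu$ equals the analogous pairing on the chain obtained by collapsing $C$ to a point: transitions internal to $C$ contribute zero, and the remaining terms reorganize into the pairing on the collapsed state space (whose stationary measure assigns mass $\mu(C)$ to the collapsed point). Applying this simultaneously to each of $A_0$, $A_1$, $B$ reduces \eqref{03} to the same type of variational problem on the chain $\tilde\eta(t)$ with reduced state space $\tilde E := (E \setminus F) \cup \{a_0, a_1, d\}$, now with single-point constraints at $a_0, a_1, d$.

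For the second reduction, I would next optimize out the free values of $\tilde f$ on $\tilde E \setminus \{a_0, a_1, d\}$: since the objective is affine in these values, the infimum over $\tilde f$ is $-\infty$ unless $(\tilde L \tilde h)(\xi) = 0$ for every $\xi \in \tilde E \setminus \{a_0, a_1, d\}$, which forces $\tilde h$ to be $\tilde L$-harmonic on this set and hence equal to the hitting-probability extension of its restriction to $\{a_0, a_1, d\}$. Using the classical identity $(\tilde L \tilde h)(\eta) = (L_T \tilde h|_{\{a_0, a_1, d\}})(\eta)$ for $\eta \in \{a_0, a_1, d\}$ valid for such extensions, the objective reduces to the analogous variational expression $2\langle f, L_T h \rangle_{\mu_T} - \langle h, (-L_T) h \rangle_{\mu_T}$ on the three-state trace process on $\{a_0, a_1, d\}$, whose jump rates from $d$ to $a_i$ equal $r_T(B, A_i)$ by Lemma \ref{s03}. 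On this three-state chain the problem has only three free scalar parameters, $c := f(d)$ in the outer infimum and $(\alpha, \beta) := (h(a_1), h(d))$ in the inner supremum, and the objective is affine in $c$ and concave quadratic in $(\alpha, \beta)$; the partial derivative with respect to $c$ gives immediately $r_T(B, A_1)\, \alpha = [r_T(B, A_0) + r_T(B, A_1)]\, \beta$, and combining this with the remaining first-order conditions the system is solved explicitly to yield the claimed value $h(d) = r_T(B, A_1)/[r_T(B, A_0) + r_T(B, A_1)]$.

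The main technical obstacle will be justifying the second reduction: verifying that after optimizing out the free extensions on $\tilde E \setminus \{a_0, a_1, d\}$ the reduced bilinear form is exactly $2\langle \cdot, L_T \cdot \rangle_{\mu_T} - \langle \cdot, (-L_T) \cdot \rangle_{\mu_T}$. In the reversible case this follows from classical Dirichlet-form theory (the Dirichlet form of a trace coincides with the Dirichlet form restricted to harmonic extensions), but in the non-reversible setting the Euler-Lagrange conditions for the inner supremum couple $\tilde L$, $\tilde L^*$, and $\tilde L^s = (\tilde L + \tilde L^*)/2$, and one must verify carefully—by direct expansion, or by appeal to the variational characterization of capacity in \cite{gl2}—that the resulting reduced form is still of the stated shape.
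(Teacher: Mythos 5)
Your first reduction (collapsing $A_0$, $A_1$, $B$ to points via the analogue of \eqref{04}) and your final three-state algebra are fine, but the step in between — the one you yourself flag as the "main technical obstacle" — is a genuine gap, and it is precisely the step the paper's proof is designed to avoid. First, the order of operations: in \eqref{03} the infimum over $f$ is the \emph{outer} operation, so you cannot "optimize out the free values of $\tilde f$" and thereby force the \emph{inner} variable $\tilde h$ to be $\tilde L$-harmonic; that argument computes $\sup_h\inf_f$ rather than $\inf_f\sup_h$, and exchanging them requires a minimax/saddle-point justification you do not supply. Indeed, for a fixed $f$ the inner maximizer solves an equation of the form $S h=-L^*f$ on the free sites ($S$ the symmetric part), so it is \emph{not} $L$-harmonic, and the identity $(\tilde L\hat h)(\eta)=(L_T h)(\eta)$ you invoke applies only to $L$-harmonic extensions. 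Second, even granting harmonicity, the identity you need for the quadratic term, namely $\<h,(-\tilde L)h\>_{\tilde\mu}=\mu(F)\<h|_G,(-L_T)h|_G\>_{\mu_T}$ at the $L$-harmonic extension, is a reversible fact: in the non-reversible case $\<h,(-L)h\>_\mu$ is the Dirichlet form of the symmetric part, the $L$-harmonic extension does not minimize it, and symmetrization does not commute with taking traces (the inequality $\Cap^s(A,B)\le\Cap(A,B)$ in \eqref{a02} is a symptom of exactly this mismatch). So your claim that the reduced form is exactly $2\<f,L_Th\>_{\mu_T}-\<h,(-L_T)h\>_{\mu_T}$ is unproven and, at the level of forms, dubious; only the values of the variational problems are known to match (via \eqref{02}), which is not enough to identify the optimizer $h$.

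The paper circumvents all of this: after collapsing only $B$ by \eqref{04}, it recognizes the resulting problem as the double variational (Dirichlet) principle of \cite[Theorem 2.4]{gl2} for the capacity $\Cap_C(A_1,A_0)$ of the collapsed chain, and reads off from that theorem that the optimal $h$ associated to the optimal $f$ is the equilibrium potential, so that $h(B)=\mb P^C_{\mf d}[H_{A_1}<H_{A_0}]$. Only then does it compute this hitting probability by a renewal decomposition at $\mf d$, passing to the trace on $A_0\cup A_1\cup\{\mf d\}$, and using Lemma \ref{s03} together with \cite[Proposition 6.1]{bl2} to identify the relevant rates with $r_T(B,A_i)$ — which is essentially your three-state endgame (note also that your computation needs $h(a_1)=1$ at the optimum, which again comes from the equilibrium-potential identification rather than from the $\partial_c$ condition alone). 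To repair your argument you would have to either prove a non-reversible trace identity for the saddle problem (new work, not "classical Dirichlet-form theory"), or invoke \cite[Theorem 2.4]{gl2} to identify the optimizer directly — and the latter is the paper's proof.
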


\begin{proof}
Denote by $L_C$ the generator of the Markov process $\eta^C(t)$ on
$E_B=(E\setminus B) \cup \{\mf d\}$, where the set $B$ has been collapsed
to a point $\mf d \not \in E$. For a function $g:E\to \bb R$ constant
over $B$, denote by $\overline{g} : E_B \to \bb R$ the function which
is equal to $g$ at $E\setminus B$, and such that $\overline{g} (\mf d)
= g(B)$. 

Denote by $\overline{\mu}$ the probability measure on $E_B$ equal to
$\mu$ on $E\setminus B$ and such that $\overline{\mu} (\mf d) =
\mu(B)$. A calculation, performed below equation (3.7) in \cite{gl2},
shows that for any pair of functions $f$, $h: E \to\bb R$ constant
over $B$,
\begin{equation}
\label{04}
\< f, Lh \>_\mu \;=\; \< \overline{f}, L_C \overline{h}
\>_{\overline{\mu}} \;.
\end{equation}
Conversely, given $\overline{f}$, $\overline{h}:E_B\to\bb R$, if we
define $f$, $h: E\to\bb R$ to be equal to $f$, $h$ on $E\setminus B$
and such that $f(\eta) = \overline{f} (\mf d)$, $h (\eta) =
\overline{h} (\mf d)$, $\eta\in B$, \eqref{04} holds. 

It follows from \eqref{04} that the variational problem \eqref{03} is
equivalent to the variational problem
\begin{equation*}
\inf_{\overline{f}} \sup_{\overline{h}} 
\big\{ 2\< \, \overline{f} , L_C \overline{h} \, \>_{\overline{\mu}}
\;-\; \< \, \overline{h} , (-L_C) \, \overline{h} \,
\>_{\overline{\mu}} \big\},
\end{equation*}
where the infimum is carried over all functions $\overline{f}:E_B\to
\bb R$ which are equal to $1$ at $A_1$ and equal to $0$ at $A_0$,
while the supremum is carried over all functions $h$ which are
constant over $A_0$, $A_1$. This variational problem corresponds to
the variational problem for the capacity between $A_1$ and $A_0$ for
the Markov process $\eta^C(t)$. By \cite[Theorem 2.4]{gl2}, the
optimal function $\overline{h}: E_B\to \bb R$ associated to the
optimal function $\overline{f}$ is the harmonic function $\overline{h}
(\eta) = \mb P^C_\eta[H_{A_1} < H_{A_0}]$. In particular,
$\overline{h} (\mf d) = \mb P^C_{\mf d} [H_{A_1} < H_{A_0}]$ and the
optimal function $h: E\to \bb R$ associated to the optimal function
$f$ for the variational problem \eqref{03} is such that
\begin{equation}
\label{05}
h(B) \;=\; \mb P^C_{\mf d} \big[H_{A_1} < H_{A_0} \big]\;.
\end{equation}

Decompose the event $\{H_{A_1} < H_{A_0}\}$ according to whether
$H^+_{\mf d} < H_{A_1}$ or $H^+_{\mf d} > H_{A_1}$ and use the strong
Markov property to get that
\begin{equation*}
\mb P^C_{\mf d} \big[ H_{A_1} < H_{A_0} \big]
\;=\; \frac {\mb P^C_{\mf d} \big[ H_{A_1} < 
\min\{ H^+_{\mf d} , H_{A_0}\} \big]}{1 - \mb P^C_{\mf d} \big[ H^+_{\mf d} < 
\min\{H_{A_1}  , H_{A_0}\} \big]}\;\cdot
\end{equation*}
The denominator can be written as $\mb P^C_{\mf d} [H_{A_1} < \min\{
H^+_{\mf d} , H_{A_0}\}] + \mb P^C_{\mf d} [H_{A_0} < \min\{ H^+_{\mf
  d} , H_{A_1}\}]$. Taking the trace of the process $\eta^C(t)$ on the
set $A_0 \cup A_1 \cup \{\mf d\}$, we get that
\begin{equation*}
\mb P^C_{\mf d} \big[ H_{A_1} < H_{A_0} \big]
\;=\; \frac {\mb P^{TC}_{\mf d} \big[ H_{A_1} < 
\min\{ H^+_{\mf d} , H_{A_0}\} \big]}
{\mb P^{TC}_{\mf d} \big[ H_{A_0} < 
\min\{ H^+_{\mf d} , H_{A_1}\} \big] +
\mb P^{TC}_{\mf d} \big[ H_{A_1} < 
\min\{ H^+_{\mf d} , H_{A_0}\} \big]}\;\cdot
\end{equation*}
By Lemma \ref{s03}, we may replace the probability $\mb P^{TC}_{\mf
  d}$ by the probability $\mb P^{CT}_{\mf d}$ in this
formula. Since $\mb P^{TC}_{\mf d} [ H_{A_0} < 
\min\{ H^+_{\mf d} , H_{A_1}\} ]$ is equal to $p_{CT}(\mf d,A_0)$,
where $p_{CT}$ stands for the jump probabilities of the process
$\eta^{CT}(t)$, multiplying the denominator and the numerator by
$\lambda_{CT}(\mf d)$, the previous ratio becomes
\begin{equation*}
\frac {R_{CT} (\mf d,A_1)}
{R_{CT} (\mf d,A_0) + R_{CT} (\mf d,A_1)}\;\cdot
\end{equation*}
By \cite[Proposition 6.1]{bl2} and since the stationary measure for
the trace process is the conditional measure, this expression is equal
to
\begin{equation*}
\frac { \mu(B)^{-1} \sum_{\eta\in B} \mu(\eta) R_T (\eta,A_1)}
{\mu(B)^{-1} \sum_{\eta\in B} \mu(\eta) R_T (\eta,A_0) 
+ \mu(B)^{-1} \sum_{\eta\in B} \mu(\eta) R_T (\eta,A_1)}\;\cdot
\end{equation*}
By definition of $r_T(B,A_i)$ we conclude that
\begin{equation*}
\mb P^C_{\mf d} \big[ H_{A_1} < H_{A_0} \big]
\;=\; \frac {r_T(B,A_1) } {r_T(B,A_0) + r_T(B,A_1)}\;\cdot
\end{equation*}
This identity concludes the proof of the proposition in view of
\eqref{05}.
\end{proof} 

\section{Birth and death process}

We prove in this subsection the metastable behavior of a class of
birth and death processes which includes the zero range processes
examined in \cite{bl3} evolving on two sites. Proposition \ref{bd}
presented below was used in \cite{bl3} to prove the metastability of
reversible zero range processes. 

Fix $a<b$ in $\bb R$ and consider a nonnegative smooth function
$H:[a,b] \to \bb R_+$. Assume that $H$ vanishes only at a finite
number of points denoted by $a_1<a_2< \cdots < a_m$:
\begin{equation*}
H(x) = 0 \quad\text{if and only if}\quad x\in\{a_1, \dots, a_m\}\;.
\end{equation*}
We do not exclude the possibility that $H$ vanishes at the boundary
points $a$, $b$. 

For each $i=1,\dots,m$, assume that there exist a neighborhood
$V_{a_i}$ of $a_i$ and $\alpha_i >0$ such that
\begin{equation*}
  H (x) = |x-a_i|^{\alpha_i} \quad \text{for all $x\in V_{a_i}$}\;,
\end{equation*}
and that $V_{a_i}\cap V_{a_j}=\varnothing$ for $i\not = j$. Let $\alpha =
\max\{\alpha_i : 1\le i\le m\}$. Assume that $\alpha >1$ and that
there are at least two exponents $\alpha_i$ equal to $\alpha$:
\begin{equation*}
\kappa \;:=\; \big| \{i : \alpha_i = \alpha\}\big| \;\ge\; 2\;,
\end{equation*}
where $|A|$ indicates the cardinality of a finite set $A$. Denote by
$b_1 < b_2 < \cdots <b_\kappa $ the elements of $\{a_1, \dots, a_m\}$
whose associated exponents are $\alpha$.

The definition of the state space of the birth and death process
requires some notation. Fix $N\ge 1$, $1\le i\le m-1$, and let 
\begin{eqnarray*}
\!\!\!\!\!\!\!\!\!\!\!\!\! &&
k^N_{0} \;=\; \min\Big\{ k\ge 0 \;:\; a > a_{1} 
- (k+1)/N \Big\}\; , \\
\!\!\!\!\!\!\!\!\!\!\!\!\! && \quad
k^N_{i} \;=\; \min\Big\{ k\ge 0 \;:\; a_i + (k+1)/N > a_{i+1} 
- (k+1)/N \Big\}\;, \\
\!\!\!\!\!\!\!\!\!\!\!\!\! && \qquad
k^N_{m} \;=\; \min\Big\{ k\ge 0 \;:\; a_m + (k+1)/N > b \Big\}\;.
\end{eqnarray*}
For $1\le i\le m-1$, set 
\begin{eqnarray*}
\!\!\!\!\!\!\!\!\!\!\!\!\! &&
G_{N,0} \;=\; \big\{a_1 - k^N_0/N, \dots, a_1 - 1/N, a_1\big\}\;, \\
\!\!\!\!\!\!\!\!\!\!\!\!\! && \quad 
G_{N,i} \;=\; \big\{a_i +1/N, a_i + 2/N, \dots, a_i + k^N_i/N, 
a_{i+1} - k^N_i/N, \dots, a_{i+1}\big\}\;, \\
\!\!\!\!\!\!\!\!\!\!\!\!\! && \qquad 
G_{N,m} \;=\; \big\{a_m+1/N, a_m+2/N, \dots, a_m +k^N_m/N\big\}
\end{eqnarray*}
and let the state space $E_N$ be $\bigcup_{i=0}^m G_{N,i}$. Note that
the exact definition of $E_N$ is not important for the meta-stability
behavior discussed in this section. The elements of $E_N$ are denoted
by the letters $x$, $y$, $z$. Two points $x<y$ are said to be
neighbors in $E_N$ if there is no $z$ in $E_N$ such that $x<z<y$.

Let $\nu_N$ be the probability measure on $E_N$ defined by
\begin{equation*}
\nu_N(x) \;=\; 
\left\{
\begin{array}{ll}
\displaystyle{
\frac 1{Z_N} \frac 1{H(x)} }& \text{if } x\not\in \{a_1, \dots,
a_m\}\;, \\
\displaystyle{
\frac 1{Z_N} N^{\alpha_i}} & \text{if } x= a_i \text{ for some $1\le
  i\le m$} \;.
\end{array}
\right.
\end{equation*}
In this formula $Z_N$ is a normalizing constant. An elementary
computation shows that
\begin{equation}
\label{ef04}
\lim_{N\to\infty} \frac{Z_N}{N^\alpha} \;=\;
\sum_{i=1}^\kappa \Big\{ 1 + \sigma_i \sum_{k\ge 1} \frac
1{k^\alpha}\Big\} \;.
\end{equation}
where $\sigma_i=1$ if $b_i\in \{a,b\}$ and $\sigma_i=2$ otherwise. In
particular, if we denote
\begin{equation*}
 m(b_i) \;=\; 1 + \sigma_i \sum_{k\ge 1} \frac 1{k^\alpha}\;, 
\quad \text{for } i=1, \dots, \kappa
\end{equation*}
then
\begin{equation*}
\lim_{N\to\infty} \nu_N(b_i) \;=\; 
\Big\{ \sum_{j=1}^{\kappa} m(b_j) \Big\}^{-1} \;>\; 0
\end{equation*}
for every $1\le i\le \kappa$.

Let $\{\ell_N : N\ge 1\}$ be a sequence of positive integers such that
$1\ll \ell_N\ll N$,
\begin{equation}
\label{08}
 \lim_{N\to\infty} \frac{\ell_N}N  \;=\; 0 \quad 
\text{and}\quad \lim_{N\to\infty}\ell_N\;=\; \infty \;,
\end{equation}
and, for each $b_i$, $1\le i\le \kappa$, define 
$$
\ms E^i_N \;:=\; E_N \cap \Big[\,b_i - \frac{\ell_N}{N}\,,\, 
b_i + \frac{\ell_N}{N}\,\Big].
$$
Since $N^{-1}\ell_N \to 0$, for $N$ large enough $\ms E^i_N \subseteq
V_{b_i}$ for every $1\le i \le \kappa$. In particular, for $N$ large
enough, $\ms E^i_N\cap\ms E^j_N=\varnothing$ for all $i\not =
j$. Moreover, since $\ell_N\to\infty$,
\begin{equation}\label{nuE}
 \lim_{N\to\infty} \nu_N(\Delta_N) \;=\; 0 \quad\text{and} 
\quad \lim_{N\to\infty}\nu_N(\ms E^i_N) \;=\; \frac{m(b_i)}
{\sum_{j=1}^{\kappa} m(b_j)}
\end{equation}
for all $1\le i\le \kappa$, where $\Delta_N := E_N \setminus
(\cup_{i=1}^{\kappa} \ms E^i_N)$.

Fix a positive function $\Phi: [a,b]\to \bb R_+$ bounded above and
below by a strictly positive constant:
\begin{equation*}
0\;<\; \delta \;\le\; \Phi (x) \;\le\; \delta^{-1}\;.
\end{equation*}
This assumption is not needed but we do not seek optimal
assumptions. Consider a birth and death process $\{Z^N_t : t\ge 0\}$
on $E_N$ with rates given by
\begin{equation*}
R_N(x,y) \;=\; 
\left\{
\begin{array}{ll}
{\displaystyle
\Phi(x)} & \text{if $x>y$,}  \\
{\displaystyle
\Phi(y) \, \frac{\nu_N(y)}{\nu_N(x)}} & 
\text{if $x<y$}\;,
\end{array}
\right.
\end{equation*}
provided $x$ and $y$ are neighbors in $E_N$. The process is of course
reversible with respect to the measure $\nu_N$. 

\begin{proposition}
\label{bd}
The Markov process $\{Z^N_t : N\ge 1\}$ exhibits a tunneling behavior
in the sense of \cite[Definition 2.2]{bl2} on the time scale
$N^{1+\alpha}$ with metastates $\ms E^i_N$, metapoints $b_i$, $1\le
i\le \kappa$, and asymptotic Markovian dynamics on
$\{1,\dots,\kappa\}$ characterized by the rates
\begin{eqnarray*}
\!\!\!\!\!\!\!\!\!\!\!\!\!\!\!\!
&& r(i, i+1) \;=\; \frac 1{m(b_i)} \, \frac
1{\int_{b_i}^{b_{i+1}} \{H(u)/\Phi(u)\} \, du}\; , \\
\!\!\!\!\!\!\!\!\!\!\!\!\!\!\!\!
&& \quad r(i+1, i) \;=\; \frac 1{m(b_{i+1})} \, \frac
1{\int_{b_i}^{b_{i+1}} \{H(u)/\Phi (u)\} \, du}\;, \quad 1\le
i<\kappa\;. 
\end{eqnarray*}
\end{proposition}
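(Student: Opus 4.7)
The plan is to verify hypotheses (H0), (H1), (H2) of Theorem \ref{teo1r} on the time-scale $\theta_N = N^{1+\alpha}$. Since $Z^N_t$ is reversible with respect to $\nu_N$, every capacity has the standard electrical-network representation: for $x<y$ in $E_N$,
\begin{equation*}
\Cap_N(x,y)^{-1} \;=\; \sum_{z} \frac{1}{c_N(z,z^+)}\,,
\end{equation*}
the sum running over consecutive $z\in E_N\cap[x,y)$ with $c_N(z,z^+)=\nu_N(z)R_N(z,z^+)=\Phi(z^+)\nu_N(z^+)$; when $z^+\notin\{a_1,\dots,a_m\}$ this equals $\Phi(z^+)/(Z_N H(z^+))$. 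All subsequent estimates reduce to Riemann-sum asymptotics for $H/\Phi$, combined with $Z_N/N^\alpha\to\sum_j m(b_j)$ from \eqref{ef04}.

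For (H1), I take $\bs\xi_i=(b_i:N\ge 1)$. The infimum defining $\Cap_N(\bs\xi_i)$ is attained at the extremal point of $\ms E^i_N$ at distance $\ell_N/N$ from $b_i$, and since $H(x)=|x-b_i|^\alpha$ on $V_{b_i}$,
\begin{equation*}
\Cap_N(\bs\xi_i)^{-1}\;\sim\;\frac{Z_N}{\Phi(b_i)N^{\alpha}}\sum_{k=1}^{\ell_N}k^{\alpha}\;\sim\;\frac{Z_N\,\ell_N^{\alpha+1}}{(\alpha+1)\Phi(b_i)N^{\alpha}}\,,
\end{equation*}
so $\Cap_N(\bs\xi_i)\sim N^\alpha/(Z_N\ell_N^{\alpha+1})$. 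On the other hand, $\Cap_N(\ms E^i_N,\breve{\ms E}^i_N)$ is controlled by the resistance across the two ``flat'' segments on either side of $\ms E^i_N$, each of order $Z_N N$ times a positive constant, so $\Cap_N(\ms E^i_N,\breve{\ms E}^i_N)\sim 1/(Z_N N)$. The ratio is $\sim(\ell_N/N)^{\alpha+1}\to 0$ by \eqref{08}.

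For (H0), I exploit the one-dimensional structure. Any trajectory from $\ms E^i_N$ to $\ms E^j_N$ with $|i-j|\ge 2$ must visit an intermediate $\ms E^k_N$, so $r_N(\ms E^i_N,\ms E^j_N)=0$ for such pairs. For the nearest-neighbor rates, the equilibrium potential $h(x)=\mb P_x[H_{\ms E^i_N}<H_{\ms E^{i+1}_N}]$ equals $1$ identically on $\{x\le\ms E^i_N\}$ and $0$ on $\{x\ge\ms E^{i+1}_N\}$, so the Dirichlet form defining $\Cap_N(\ms E^i_N,\ms E^{i+1}_N)$ receives contributions only from the middle segment $[b_i+\ell_N/N,b_{i+1}-\ell_N/N]$. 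Similarly the total capacity splits in parallel:
\begin{equation*}
\Cap_N(\ms E^i_N,\breve{\ms E}^i_N)\;=\;\Cap_N(\ms E^i_N,\ms E^{i-1}_N)+\Cap_N(\ms E^i_N,\ms E^{i+1}_N)\,.
\end{equation*}
A direct computation using \cite[Proposition 6.1]{bl2} together with the decoupling of the equilibrium potential yields the finer identity $\nu_N(\ms E^i_N)\,r_N(\ms E^i_N,\ms E^{i+1}_N)=\Cap_N(\ms E^i_N,\ms E^{i+1}_N)$. Combining this with $\nu_N(\ms E^i_N)\to m(b_i)/\sum_j m(b_j)$ (from \eqref{nuE}) and $\Cap_N(\ms E^i_N,\ms E^{i+1}_N)\sim 1/(Z_N N\int_{b_i}^{b_{i+1}}\{H(u)/\Phi(u)\}du)$ gives $\theta_N r_N(\ms E^i_N,\ms E^{i+1}_N)\to r(i,i+1)$, and likewise for $r(i+1,i)$.

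Finally, (H2) is immediate from \eqref{nuE}: $\nu_N(\Delta_N)\to 0$ while each $\nu_N(\ms E^i_N)$ converges to a strictly positive limit; note that every $i$ is non-absorbing since $r(i,i\pm 1)>0$ whenever admissible, so (H2) is indeed required for all $i$. The delicate technical point is justifying $\nu_N(\ms E^i_N)r_N(\ms E^i_N,\ms E^{i+1}_N)=\Cap_N(\ms E^i_N,\ms E^{i+1}_N)$; this rests essentially on the one-dimensional structure, which guarantees that the equilibrium potential is constant on each side of $\ms E^i_N$ and hence that the exit direction from $\ms E^i_N$ is encoded entirely by the restricted network between $b_i$ and $b_{i+1}$. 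The remaining estimates are routine Riemann-sum approximations of smooth integrands.
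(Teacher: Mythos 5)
Your proposal is correct and follows essentially the same route as the paper: take metapoints $b_i$, verify ({\bf H0})--({\bf H2}) through the explicit one-dimensional capacity formula \eqref{cap1} and Riemann-sum asymptotics \eqref{capl}, and reduce the mean jump rates to capacities. The key identity you justify by the one-dimensional structure, $\nu_N(\ms E^i_N)\,r_N(\ms E^i_N,\ms E^{i+1}_N)=\Cap_N(b_i+\ell_N/N,\,b_{i+1}-\ell_N/N)$, is exactly what the paper obtains, slightly more cleanly, by observing that the only trace transition from $G_1=\cup_{j\le i}\ms E^j_N$ to $G_2=\cup_{j>i}\ms E^j_N$ occurs across that gap and then applying \eqref{a03}.
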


\begin{proof}
We show that the hypotheses of \cite[Theorem 2.7]{bl2} are in
force. Let $\xi^i_N = b_i$ for $1\le i\le \kappa$. The asymptotic
dynamics has no absorbing point and condition {\bf (H2)} of
\cite[Theorem 2.7]{bl2} follows from \eqref{nuE}. 

To check conditions {\bf (H0)}, {\bf (H1)} we take advantage from the
one-dimensional setting to get explicit expressions for capacities.
For two disjoint subsets $A$, $B$ of $E_N$, denote by $\Cap_N(A,B)$
the capacity between $A$ and $B$. When $A=\{a\}$ we represent
$\Cap_N(A,B)$ by $\Cap_N(a,B)$ with the same convention for $B$.  Let
$x<y$ be points in $E_N$. Recall that
$\Cap_N(x,y)=D_N(f_{x,y})$ where $f_{x,y}:E_N\mapsto \bb
R$ solves the equation $L_Nf_{x,y}(z)=0$ for $z\not \in
\{x,y\}$ with boundary conditions $f_{x,y}(x)=1$ and
$f_{x,y}(y)=0$. An elementary computation gives that $f(z)=1$
for $z\le x$, $f(z)=0$ for $z\ge y$ and
\begin{equation*}
f(z+1/N) - f(z) \;=\; \frac{ \big\{ \nu_N(z) R_N(z,z+1/N) \big\}^{-1}}
{ \sum_{z=x}^{y-1/N} \big\{ \nu_N(z) R_N(z,z+1/N) \big\}^{-1}}\;
\end{equation*}
for $z\in E_N \cap [x,y)$. Hence,
\begin{equation}
\label{cap1}
\Cap_N(x,y) \;=\; \frac{1}{ \sum_{z=x}^{y-1/N} 
\big\{ \nu_N(z) R_N(z,z+1/N) \big\}^{-1}}\;.
\end{equation}
In last two formulae, there is a slight abuse of notation since $E_N$
is not the set $\{z/N : z\in\bb Z \cap [aN, bN]\}$, but the meaning is
clear. In particular, if $\{x^N: N\ge 1\}$, $\{y^N: N\ge 1\}$ are two
sequences in $E_N$ such that $x^N\to a'$ and $y^N\to b'$ for some
$a\le a' < b'\le b$, by (\ref{cap1}),
\begin{equation}
\label{capl}
\lim_{N\to\infty} N^{1+\alpha} \Cap_N(x^N,y^N) \;=\; \frac{1}{\sum_{i=1}^{\kappa}
   m(b_i)}\;\Big\{\;\int_{a'}^{b'} \{H(u)/\Phi (u)\}
\, du \;\Big\}^{-1}\;>\;0\;.
\end{equation}

Denote by $r_N(\ms E^i_N, \ms E^j_N)$ the average jump rate from $\ms
E^i_N$ to $\ms E^j_N$ of the trace of the process $Z^N_t$ on $\ms E_N
= \cup_{1\le m\le \kappa} \ms E^m_N$, defined by \eqref{nv2} in a
general context.  Clearly, $r_N(\ms E^i_N, \ms E^j_N)=0$ for
$|i-j|>1$. Fix an arbitrary $1\le i<\kappa$ and let $G_1=\cup_{j\le i}
\ms E^j_N$, and $G_2=\cup_{j>i}\ms E^j_N$ so that
\begin{eqnarray*}
\nu_N(\ms E^i_N)\,r_N(\ms E^i_N,\ms E^{i+1}_N) &=& \nu_N(b_i+\frac{\ell_N}{N})\,
R^{\ms E}_N(b_i + \frac{\ell_N}{N}, b_{i+1} - \frac{\ell_N}{N}) \\
&=& \nu_N (G_1)\, r_N(G_1,G_2)\;,
\end{eqnarray*}
where $R^{\ms E}_N(x,y)$, $x\not = y \in \ms E_N$, represents the jumps
rates of the trace of $Z^N_t$ on $\ms E_N$.  Therefore, by
\eqref{a03},
\begin{eqnarray*}
r_N(\ms E^i_N,\ms E^{i+1}_N) &=& 
\frac{\nu_N(G_1)\,r_N(G_1,G_2)}{\nu_N(\ms E^i_N)} 
\;=\; \frac{\Cap_N(G_1,G_2)}{\nu_N(\ms E^i_N)}  \\
&=& \frac{\Cap_N(b_i+\ell_N/N,b_{i+1} - \ell_N/N)}{\nu_N(\ms E^i_N)} \;\cdot
\end{eqnarray*}
Analogously, we obtain that
$$
r_N(\ms E^i_N,\ms E^{i-1}_N)\;=\;
\frac{\Cap_N(b_{i}-\ell_N/N , b_{i-1} + \ell_N/N)}
{\nu_N(\ms E^{i}_N)} \;
$$
for any $1<i\le \kappa$. Therefore, by (\ref{capl}) and (\ref{nuE}),
$$
\lim_{N\to\infty} N^{1+\alpha} \, r_N(\ms E^i_N,\ms E^{i+1}_N) 
\;=\; \frac 1{m(b_i)} \, 
\Big\{ \,\int_{b_i}^{b_{i+1}} \{H(u)/\Phi(u)\} \, du \,\Big\}^{-1}\;
$$
and
$$
\lim_{N\to\infty} N^{1+\alpha} \, r_N(\ms E^i_N,\ms E^{i-1}_N) 
\;=\; \frac 1{m(b_{i+1})} \, 
\Big\{\,\int_{b_i}^{b_{i+1}} \{H(u)/\Phi(u)\} \, du \,\Big\}^{-1}\;
$$
for any $1\le i < \kappa$, which concludes the proof of assumption
{\bf (H0)}.

The same arguments show that
\begin{equation}
\label{07}
\begin{split}
& \lim_{N\to\infty} N^{1+\alpha}\, \Cap_N (\ms E^i_N, \breve{\ms E}^i_N) \\
& \qquad \;=\; \frac{1}{\sum_{1\le i\le \kappa} m(b_i)}
\;\Big\{ \frac 1{\int_{b_i}^{b_{i+1}} \{H(u)/\Phi (u)\}\, du}
+ \frac 1{\int_{b_{i-1}}^{b_i} \{H(u)/\Phi (u)\}\, du} \Big\}\;, 
\end{split}
\end{equation}
provided $b_i \not = a$, $b$, with similar identities if $b_i=a$ or if
$b_i=b$.

It remains to check condition {\bf (H1)}.  For any $1\le i \le \kappa$
and $N$ large enough, $H(x)=|x-b_i|^{\alpha}$ for all $x\in{\ms
  E^i_N}$. In consequence, by (\ref{ef04}) and (\ref{cap1}), there
exists a positive constant $C_0$, independent of $N$, such that
\begin{equation*}
\Cap_N(x,b_i) \;\ge\; \frac{C_0}
{\ell_N^{\alpha +1}}\;,
\end{equation*}
for any $1\le i\le\kappa$ and $x\in \ms E^i_N$. Therefore, since
$\ell_N\ll N$, by \eqref{07},
\begin{equation*}
\lim_{N\to\infty} \sup_{x\in \ms E^i_N} \frac  
{\Cap_N (\ms E^i_N, \breve{\ms E}^i_N)}
{\Cap_N(x,b_i)} \;=\; 0
\end{equation*}
for all $1\le i\le\kappa$, which concludes the proof of the
proposition. 
\end{proof}

\section{Potential theory for positive recurrent processes} 
\label{sec2}

We state in this section several properties of continuous time Markov
chains used throughout the article.  Consider a countable set $E$ and
a matrix $R : E\times E \to \bb R$ such that $R(\eta, \xi)\ge 0$,
$\eta\not = \xi$, $-\infty < R(\eta, \eta) <0$, $\sum_{\xi}
R(\eta,\xi)=0$, $\eta\in E$. Let $\lambda(\eta) = -
R(\eta,\eta)$. Since $\lambda(\eta)$ is finite and strictly positive,
we may define the transition probabilities $\{ p(\eta,\xi) :
\eta,\xi\in E \}$ as
\begin{equation}
\label{t01}
p(\eta,\xi) \;=\; \frac 1{\lambda(\eta)} \, R(\eta,\xi)
\quad \textrm{for $\eta\not = \xi$}\;,
\end{equation}
and $p(\eta,\eta)=0$ for $\eta\in E$. We assume throughout this
section that $\{ p(\eta,\xi) : \eta,\xi\in E \}$ are the transition
probabilities of an irreducible and recurrent discrete time Markov
chain, denoted by $Y=\{Y_n : n\ge 0\}$.

Let $\{\eta (t) : t\ge 0\}$ be the unique strong Markov process
associated to the rates $R(\eta,\xi)$. We shall refer to
$R(\cdot,\cdot)$, $\lambda(\cdot)$ and $p(\cdot,\cdot)$ as the
transition rates, holding rates and jump probabilities of $\{ \eta(t) :
t\ge 0\}$, respectively. Since the jump chain $Y$ is irreducible and
recurrent, so is the corresponding Markov process $\{\eta(t) : t\ge
0\}$. We shall assume throughout this section that $\eta(t)$ is
\emph{positive recurrent}.  In consequence, $\eta(t)$ has a unique
invariant probability measure $\mu$. Moreover,
\begin{equation}
\label{m}
M(\eta) \;:=\; \lambda(\eta)\mu(\eta)\;, \quad \eta\in E\;,
\end{equation}
is an invariant measure for the jump chain $Y$, unique up to
scalar multiples. The proofs of these assertions can be found in
Sections 3.4 and 3.5 of \cite{n}. We assume furthermore that the
holding rates are summable with respect to $\mu$:
\begin{equation}
\label{ar}
\sum_{\eta\in E} \lambda(\eta)\, \mu(\eta) \;<\; \infty\;,
\end{equation}
so that $M$ is a finite measure. Assumption (\ref{ar}) reduces the
potential theory of continuous time Markov chains to the potential
theory of discrete time Markov chains.

Let $L^2(\mu)$, $L^2(M)$ be the space of square integrable functions
$f:E\to \bb R$ endowed with the usual scalar product $\<f,g\>_m =
\sum_{\eta\in E} f(\eta) g(\eta) m(\eta)$, with $m=\mu$, $M$,
respectively.  Denote by $P$ the bounded operator in $L^2(M)$ defined
by
\begin{equation}
\label{a01}
(Pf)(\eta) \;=\; \sum_{\xi\in E}  p(\eta,\xi) \, \{f(\xi) -
f(\eta)\} 
\end{equation}
for $f\in L^2(M)$, and by $L$ the generator of the Markov process
$\{\eta(t) : t\ge 0\}$. Thus, for every finitely supported function
$f:E\to \bb R$,
\begin{equation*}
(Lf)(\eta) \;=\; \sum_{\xi\in E}  R(\eta,\xi) \, \{f(\xi) -
f(\eta)\}\;. 
\end{equation*}

Let $L^*$ be the adjoint of the operator $L$ on $L^2(\mu)$. $L^*$ is
the generator of a Markov process $\{\eta^*(t) : t\ge 0\}$ with
holding rates $\lambda^*$ and jump rates $R^*$ given by
$\lambda^*(\eta)=\lambda(\eta)$, $R^*(\eta,\xi) = \mu(\xi) R(\xi,
\eta)/\mu(\eta)$, so that $\mu(\eta) R^*(\eta,\xi) = \mu(\xi) R(\xi,
\eta)$. Let $P^*$ be the adjoint of $P$ in $L^2(M)$. Clearly, the
bounded operator $P^*$ is given by \eqref{a01} with $p^*$ in place of
$p$. Denote by $\{Y^*_n : n\ge 0\}$ the discrete time Markov chain
associated to $P^*$.

Similarly, let $S$ be the symmetric part of the operator $L$ on
$L^2(\mu)$. $S$ is the generator of a Markov process $\{\eta^s(t) :
t\ge 0\}$ with holding rates $\lambda^s$ and jump rates $R^s$ given by
$\lambda^s(\eta)=\lambda(\eta)$, $R^s(\eta,\xi) = (1/2) \{R(\eta,\xi)+
R^*(\eta,\xi)\}$, so that $\mu(\eta) R^s(\eta,\xi) = \mu(\xi) R^s(\xi,
\eta)$.

Let $\mb P_{\eta}$, $\mb P^*_{\eta}$, $\mb P^s_{\eta}$, $\eta\in E$,
be the probability measure on the path space $D(\bb R_+, E)$ of right
continuous trajectories with left limits induced by the Markov process
$\{\eta(t) : t\ge 0\}$, $\{\eta^*(t) : t\ge 0\}$, $\{\eta^s(t) : t\ge
0\}$ starting from $\eta$, respectively. Expectation with respect to
$\mb P_{\eta}$, $\mb P^*_{\eta}$, $\mb P^s_{\eta}$ are denoted by $\mb
E_{\eta}$, $\mb E^*_{\eta}$, $\mb E^s_{\eta}$, respectively.

Denote by $H_A$ (resp. $H^+_A$), $A\subseteq E$, the hitting time of
(resp. return time to) the set $A$:
\begin{equation*}
\begin{split}
& H_{A} \;=\;\inf\{ t> 0 : \eta (t) \in A \}\;, \\
& \quad H^+_A \,=\, \inf\{ t>0 : \eta(t)\in A, \eta (s) \not=\eta (0)
\;\;\textrm{for some $0< s < t$}\}\;.
\end{split}
\end{equation*}

Let $F$ be a proper subset of $E$.  Denote by $\{\mc T_{t} : t\ge 0\}$
the time spent on the set $F$ by the process $\eta(s)$ in the time
interval $[0,t]$:
$$
\mc T_{t} \,:=\, \int_0^t \mb 1\{\eta (s) \in F\} \,ds\;.
$$
Notice that $\mc T_{t}\in \bb R_+$, $\mb P_{\eta}$-a.s.\! for every
$\eta\in E$ and $t\ge 0$. Denote by $\{\mc S_t : t\ge 0\}$ the
generalized inverse of $\mc T_t$:
$$
\mc S_t \,:=\, \sup\{s\ge 0 : \mc T_s \le t \}\,.
$$
Since $\{\eta(t) : t\ge 0\}$ is irreducible and recurrent, $\lim_{t\to
  \infty}\mc T_t = \infty$, $\mb P_{\eta}$-a.s.\! for every $\eta\in
E$. Therefore, the random path $\{\eta^F(t) : t\ge 0\}$, given by
$\eta^F(t) = \eta (\mc S_t)$, is $\mb P_{\eta}$-a.s.\! well defined
for all $\eta\in E$ and takes value in the set $F$.  We call the
process $\{\eta^F(t) : t\ge 0\}$ the trace of $\{\eta(t) : t\ge 0\}$ on
the set $F$.

Denote by $R^F(\eta,\xi)$ the jump rates of the trace process
$\{\eta^F(t) : t\ge 0\}$. By Propositions 6.1 and 6.3 in \cite{bl2},
$\{\eta^F(t) : t\ge 0\}$ is an irreducible, recurrent strong Markov
process whose invariant measure $\mu^F$ is given by
\begin{equation}
\label{g01}
\mu^F(\xi) \;=\; \frac {1}{\mu(F)}\,  
\mu(\xi)\;,\quad \xi\in F\;.
\end{equation}

For each pair $A,B$ of disjoint subsets of $F$, denote by $r_F(A,B)$
the average rate at which the trace process jumps from $A$ to $B$:
\begin{equation*}
r_F(A,B) \;:=\; \frac 1{\mu (A)} \sum_{\eta\in A} \mu (\eta) 
\sum_{\xi \in B} \, R^F(\eta,\xi) \,.
\end{equation*}
By \cite[Proposition 6.1]{bl2},
\begin{equation}
\label{rh}
r_F(A,B)\;=\; \frac 1 {\mu (A)} 
\sum_{\eta\in A} M(\eta) \,{\bf P}_{\eta}\big[ \, H^+_F = H^+_B \, \big]\,.
\end{equation}
We shall refer to $r_F(\cdot,\cdot)$ as the mean set rates associated
to the trace process.

Recall from \cite{gl2} that the capacity between two disjoint subsets
$A$, $B$ of $E$, denoted by $\Cap(A,B)$, is defined as
\begin{equation}
\label{06}
\Cap (A,B) \;:=\; \sum_{\eta\in A} M(\eta) 
\, \mb P_{\eta}\big[ \, H^+_B <H^+_{A} \, \big]\,.
\end{equation}
Hence, by \eqref{rh} for any two disjoint subsets $A$, $B$ of $E$,
\begin{equation}
\label{a03}
\Cap(A,B) \,=\, \mu(A)\, r_{A\cup B} (A,B) \,. 
\end{equation}

Let $\Cap^* (A,B)$, $\Cap^s(A,B)$ be the capacity between two disjoint
subsets $A$, $B$ of $E$ for the adjoint, symmetric process,
respectively. By equation (2.4) and Lemmata 2.3, 2.5 in \cite{gl2},
\begin{equation}
\label{a02}
\Cap^* (A,B)\;=\;\Cap (B,A)\;=\;\Cap (A,B)\;, \quad
\Cap^s(A,B) \;\le\; \Cap (A,B) \;.
\end{equation}

Denote by $\Cap_F$ the capacity with respect to the trace process
$\eta^F(t)$. By the proof of \cite[Lemma 6.9]{bl2}, for every subsets
$A$, $B$ of $F$, $A\cap B = \varnothing$,
\begin{equation}
\label{02}
\mu(F)\,\Cap_F(A,B)\,=\, \Cap(A,B)\,.
\end{equation}

Next result presents an identity between the capacities $\Cap^s(A,B)$
and $\Cap (A,B)$, somehow surprising in view of inequality
\eqref{a02}.

\begin{lemma}
\label{s01}
Let $A$, $B$, $C$ be three disjoint subsets of $E$. Then,
\begin{equation*}
\begin{split}
& \Cap (A, B\cup C) \;+\; \Cap (B, A\cup C) \;-\; \Cap (C, A\cup B) \\
&\qquad \;=\;
\Cap^s (A, B\cup C) \;+\; \Cap^s (B, A\cup C) \;-\; \Cap^s (C, A\cup B)  \;.   
\end{split}
\end{equation*}
\end{lemma}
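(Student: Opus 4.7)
The plan is to reduce both sides of the identity to a common Dirichlet-form pairing of equilibrium potentials, then compare. Introduce the non-reversible equilibrium potentials $h_A$, $h_B$, $h_C$, the unique functions with $Lh_X=0$ on $E\setminus(A\cup B\cup C)$ and boundary values $h_X=\mathbf{1}_X$ on $A\cup B\cup C$; likewise $h^s_A$, $h^s_B$, $h^s_C$ for $L^s$. By uniqueness applied to $h_A+h_B+h_C$ (which is $L$-harmonic on the complement with boundary value $1$), we obtain $h_A+h_B+h_C\equiv 1$, and similarly in the symmetric case. Set $D(f):=\langle f,-Lf\rangle_\mu$; since the antisymmetric part of $L$ kills diagonals, $D(f)=\langle f,-L^s f\rangle_\mu$, and a standard integration by parts using the boundary conditions gives $\Cap(X,Y)=D(h_{X,Y})$ as well as $\Cap^s(X,Y)=D(h^s_{X,Y})$. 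Moreover $D(1-f)=D(f)$ (from $L\mathbf 1=0$ and $\mu L=0$), and polarization yields $D(f+g)=D(f)+D(g)+2\langle f,-L^s g\rangle_\mu$.

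Combining these with $h_C=1-h_A-h_B$,
\[
\Cap(A,B\cup C)+\Cap(B,A\cup C)-\Cap(C,A\cup B)
=D(h_A)+D(h_B)-D(h_A+h_B)=-2\langle h_A,-L^s h_B\rangle_\mu,
\]
and the analogous computation for the symmetric process yields
\[
\Cap^s(A,B\cup C)+\Cap^s(B,A\cup C)-\Cap^s(C,A\cup B)=-2\langle h^s_A,-L^s h^s_B\rangle_\mu.
\]
The lemma thus reduces to
\[
\langle h_A,-L^s h_B\rangle_\mu=\langle h^s_A,-L^s h^s_B\rangle_\mu.
\]

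To prove this, I would write $h_A=h^s_A+\phi_A$, $h_B=h^s_B+\phi_B$ with $\phi_A$, $\phi_B$ vanishing on $A\cup B\cup C$, and expand the pairing. Two of the three cross terms, $\langle\phi_A,-L^s h^s_B\rangle_\mu$ and $\langle h^s_A,-L^s\phi_B\rangle_\mu$, vanish at once: since $h^s_X$ is $L^s$-harmonic on $E\setminus(A\cup B\cup C)$, the function $L^s h^s_X$ is supported in $A\cup B\cup C$, while $\phi_X$ is supported in the complement, and $L^s$ is self-adjoint with respect to $\mu$.

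The remaining step, and the main obstacle, is to show that $\langle\phi_A,-L^s\phi_B\rangle_\mu$ also vanishes. I expect this to require invoking the duality $\mu(\eta)R(\eta,\xi)=\mu(\xi)R^*(\xi,\eta)$, together with the adjoint equilibrium potentials $h^*_A$, $h^*_B$ satisfying $L^*h^*_X=0$ on $E\setminus(A\cup B\cup C)$, in order to express the interior values of $\phi_X$ in terms of an adjoint-side correction whose pairing against $L^s\phi_{X'}$ can be shown to cancel by a time-reversal argument. An alternative route is to invoke \eqref{02} and transfer the whole identity to the trace process on $F:=A\cup B\cup C$, where $A$, $B$, $C$ partition the (trace) state space, so that each capacity becomes an explicit sum of mean rates weighted by $\mu$; the alternating sum then collapses, by stationarity of the trace at $C$ (incoming flux equals outgoing flux), to twice a symmetrized $A\leftrightarrow B$ flux, and one checks that the $R+R^*$ symmetrization underlying $R^s$ reproduces the same quantity.
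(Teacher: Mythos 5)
Your first reduction is correct, and it is a genuinely different route from the paper's: the identities $\Cap(A,B\cup C)=\langle h_A,(-L)h_A\rangle_\mu=\langle h_A,(-L^s)h_A\rangle_\mu$ (valid also nonreversibly, since $(-L)h_A$ is supported on $A\cup B\cup C$ and only the $A$-part contributes), $h_A+h_B+h_C\equiv 1$, and polarization do reduce the lemma to $\langle h_A,(-L^s)h_B\rangle_\mu=\langle h^s_A,(-L^s)h^s_B\rangle_\mu$, and the two cross terms you kill via self-adjointness of $L^s$ do vanish. But the step you leave open, $\langle\phi_A,(-L^s)\phi_B\rangle_\mu=0$, is not a missing technicality that time reversal will supply: it is false in general, and with the definitions \eqref{06} of $\Cap$ and $\Cap^s$ the asserted identity itself fails when $A\cup B\cup C\neq E$. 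Take $E=\{1,2,3,4\}$ with $R(1,2)=R(1,4)=R(2,3)=R(3,1)=R(4,2)=1$ and all other rates zero, so $\mu\propto(1,2,2,1)$ and $M(1)=M(2)=M(3)=1/3$, and let $A=\{1\}$, $B=\{2\}$, $C=\{3\}$. For the original chain all three escape probabilities in \eqref{06} equal one, so the left-hand side is $1/3+1/3-1/3=1/3$; for the symmetrized walk the escape probabilities are $7/8$, $7/8$, $1$, so the right-hand side is $7/24+7/24-1/3=1/4$. Correspondingly $\langle\phi_A,(-L^s)\phi_B\rangle_\mu=-1/24\neq0$. So your route can only terminate in the case where the interior $E\setminus(A\cup B\cup C)$ is empty; there $\phi_A=\phi_B\equiv 0$ and your computation is already a complete (and essentially trivial) proof.

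That partition case is exactly what the paper's displayed computation handles, and it is the flux argument you sketch as an alternative: when $A\cup B\cup C=E$, \eqref{a03} gives $\Cap(A,B\cup C)=\sum_{\eta\in A}\mu(\eta)R(\eta,B\cup C)$ and its analogues, the alternating sum telescopes to $\sum_{\eta\in A}\mu(\eta)R(\eta,B)+\sum_{\eta\in B}\mu(\eta)R(\eta,A)=2\sum_{\eta\in A}\mu(\eta)R^s(\eta,B)$, and one reverses the computation with $R^s$. Be warned, however, that the "one checks" at the end of your alternative route is not routine: \eqref{02} applied to the symmetric process turns $\Cap^s$ into a capacity for the trace of $\eta^s(t)$ on $F=A\cup B\cup C$, whereas the telescoping on the left produces the symmetrization of the trace of $\eta(t)$, and trace and symmetrization do not commute; in the same four-state example the two $A$--$B$ fluxes are $1/6$ and $1/8$. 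This is the same obstruction you met as $\langle\phi_A,(-L^s)\phi_B\rangle_\mu$, merely in different clothing (and it is the weak point of the reduction step stated at the beginning of the paper's proof as well). The safe conclusion is: prove and use the identity for $A\cup B\cup C=E$, or equivalently state it for the trace process on $A\cup B\cup C$; in that form both your argument and the paper's computation are complete.
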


\begin{proof}
Taking $F=A\cup B \cup C$ in \eqref{02}, we may assume that $A$, $B$,
$C$ forms a partition of $E$. In this case, since $\Cap (C, A\cup B) =
\Cap (A\cup B, C)$ and since $E= A\cup B \cup C$, by \eqref{a03} the
left hand side of the previous equation is equal to
\begin{equation*}
\begin{split}
& \sum_{\eta\in A} \mu(\eta) R(\eta, B\cup C) \;+\;
\sum_{\eta\in B} \mu(\eta) R(\eta, A\cup C) \;-\;
\sum_{\eta\in A\cup B} \mu(\eta) R(\eta, C)  \\
& \qquad
=\; \sum_{\eta\in A} \mu(\eta) R(\eta, B) \;+\;
\sum_{\eta\in B} \mu(\eta) R(\eta, A) 
\; =\; 2 \sum_{\eta\in A} \mu(\eta) R^s(\eta, B)\;,
\end{split}
\end{equation*}
where $R(\xi, D) = \sum_{\zeta\in D} R(\xi,\zeta)$.  Performing the
computations backward with $R^s$ in place of $R$ we conclude the
proof.
\end{proof}

We conclude this section proving a relation between expectations of
time integrals of functions and capacities. Fix two disjoint subsets
$A$, $B$ of $E$. Denote by $f_{AB}$, $f^*_{AB}:E \to\bb R$ the harmonic
functions defined as
\begin{equation*}
f_{AB}(\eta) \;:=\; \mb P_{\eta}\big[\,H_{A} < H_B \,\big]\;, \quad
f^*_{AB}(\eta) \;:=\; \mb P^*_{\eta}\big[\,H_{A} < H_B \,\big] \;.  
\end{equation*}
An elementary computation shows that $f_{AB}$ solves the equation
\begin{equation}
\label{eqf2}
\left\{
\begin{array}{ll}
(L f)(\eta) =0 & \eta\in E\setminus (A\cup B)\;, \\
f(\eta) = 1 & \eta\in A\;, \\
f(\eta) = 0 & \eta\in B \;.
\end{array}
\right.
\end{equation}
and that $f^*_{AB}$ solves the same equation with the adjoint $L^*$
replacing $L$.  Clearly, we may replace the generator $L$ by the
operator $I-P$ in the above equation, and \eqref{eqf2} has a unique
solution in $L^2(M)$ given by $f_{AB}$.

Define the harmonic measure $\nu_{AB}$, $\nu^*_{AB}$ on $A$ as
$$
\nu_{AB}(\eta)\,=\, \frac{M(\eta) \,
\mb P_{\eta} \big[ \, H^+_B <H^+_{A} \, \big] }{\Cap(A,B)}\,,
\quad \nu^*_{AB}(\eta)\,=\, \frac{M(\eta) \,
\mb P^*_{\eta} \big[ \, H^+_B <H^+_{A} \, \big] }{\Cap^*(A,B)}
\quad \eta\in A\,.
$$
Denote by ${\bf E}_{\nu_{AB}}$ the expectation associated to the
Markov process $\{\eta(t) : t\ge 0\}$ with initial distribution
$\nu_{AB}$.

\begin{proposition}
\label{bovier}
Fix two disjoint subsets $A$, $B$ of $E$.  Let $g:E \to\bb R$ be a
$\mu$-integrable function. Then,
\begin{equation}
\label{01}
\mb E_{\nu^*_{AB}}\Big[ \int_0^{H_B} g(\eta(t))\,dt \Big] 
\;=\; \frac{\langle\, g \,,\, f^*_{AB}\rangle_{\mu}\, }
{\Cap(A,B)}\;,  
\end{equation}
where $\<\cdot , \cdot \>_\mu$ represents the scalar product in
$L^2(\mu)$.
\end{proposition}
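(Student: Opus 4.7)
The plan is to identify both sides of \eqref{01} as the integral of $g$ against a common measure on $E$. On the left, by Fubini (first for $g\ge 0$ by monotone convergence, then extended to signed $g\in L^1(\mu)$ via the bound $\phi\le\mu/\Cap(A,B)$ produced by the argument), the expectation equals $\sum_\xi g(\xi)\phi(\xi)$, where
\[
\phi(\xi)\;:=\;\mb E_{\nu^*_{AB}}\Big[\int_0^{H_B}\!\mb 1\{\eta(t)=\xi\}\,dt\Big]\;=\;\sum_{\eta\in A}\nu^*_{AB}(\eta)\,G_B(\eta,\xi)
\]
is the occupation measure of the chain killed on $B$ started from $\nu^*_{AB}$ and $G_B$ is the associated Green kernel. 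The right-hand side of \eqref{01} is $\sum_\xi g(\xi)\psi(\xi)$ with $\psi(\xi):=\mu(\xi)f^*_{AB}(\xi)/\Cap(A,B)$. The task therefore reduces to the pointwise identity $\phi=\psi$.

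The core step is to show that $\phi$ and $\psi$ solve the same forward Poisson problem
\[
\sum_{\eta\in E} h(\eta)\,R(\eta,\xi)\;=\;-\,\nu^*_{AB}(\xi),\quad \xi\in E\setminus B;\qquad h\equiv 0\text{ on }B,
\]
with the convention $\nu^*_{AB}\equiv 0$ outside $A$. For $\phi$ this arises from integrating the Fokker--Planck equation of the killed semigroup between $t=0$ and $t=\infty$, using $\mb P^{\nu^*_{AB}}[H_B<\infty]=1$. For $\psi$, the detailed-balance identity $\mu(\eta)R(\eta,\xi)=\mu(\xi)R^*(\xi,\eta)$ rewrites
\[
\sum_\eta\psi(\eta)\,R(\eta,\xi)\;=\;\frac{\mu(\xi)}{\Cap(A,B)}\,(L^*f^*_{AB})(\xi).
\]
This vanishes on $E\setminus(A\cup B)$ since $f^*_{AB}$ is $L^*$-harmonic there by \eqref{eqf2}. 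For $\xi\in A$, a first-step analysis of $f^*_{AB}$ at $\xi$ yields $(L^*f^*_{AB})(\xi)=-\lambda(\xi)\mb P^*_\xi[H^+_B<H^+_A]$; multiplying by $\mu(\xi)/\Cap(A,B)$ and using $M(\xi)=\mu(\xi)\lambda(\xi)$ together with the definition of the adjoint harmonic measure $\nu^*_{AB}$ gives exactly $-\nu^*_{AB}(\xi)$. Finally, both $\phi$ and $\psi$ vanish on $B$ by construction.

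The main obstacle is uniqueness for this Poisson equation on a countable state space. Positive recurrence together with \eqref{ar} implies $\mb E_\eta[H_B]<\infty$ for every $\eta$, and hence $\sum_\xi\phi(\xi)=\mb E_{\nu^*_{AB}}[H_B]<\infty$, so the chain killed at $B$ has a transient sub-Markov semigroup. The difference $\phi-\psi$ is then a finite signed measure vanishing on $B$ and annihilated by the transpose of the killed generator; since a transient sub-Markov semigroup admits no nontrivial invariant measure, we obtain $\phi=\psi$, from which \eqref{01} follows at once.
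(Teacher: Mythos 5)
Your argument is correct in substance but follows a genuinely different route from the paper's. The paper proves \eqref{01} first for $g=\mb 1\{\xi\}$ by passing to the embedded jump chain and using the path--reversal identity $M(\eta)\,\mb P_\eta[Y_n=\xi,\, n<\bb H_B]=M(\xi)\,\mb P^*_\xi[Y_n=\eta,\, n<\bb H_B]$, then resumming through a last--visit--to--$A$ decomposition to produce $\mu(\xi)\,\mb P^*_\xi[\bb H_A<\bb H_B]/\Cap(A,B)$, and finally extends by linearity and monotone convergence; that proof is constructive and needs no uniqueness statement. You instead identify both sides of \eqref{01} as integrals of $g$ against measures, show that the occupation measure $\phi$ of the killed process and $\psi=\mu f^*_{AB}/\Cap(A,B)$ solve the same forward Poisson problem (your computation of $L^*f^*_{AB}$ on $A$ and off $A\cup B$, via $\mu(\eta)R(\eta,\xi)=\mu(\xi)R^*(\xi,\eta)$ and $\Cap^*(A,B)=\Cap(A,B)$ from \eqref{a02}, is exactly right), and conclude by uniqueness; this buys a clean interpretation of the right--hand side as the Green measure of the killed chain started from $\nu^*_{AB}$, at the price of the uniqueness step, which is the one place you should tighten. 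As phrased, ``a transient sub--Markov semigroup admits no nontrivial invariant measure'' is false for infinite measures (counting measure for a transient walk), and with possibly unbounded holding rates the passage from $hL_B=0$ to invariance under the killed semigroup is not automatic. Both difficulties vanish if you multiply by $\lambda$ and work with the killed jump kernel $p_B$: the signed measure $u=\lambda(\phi-\psi)$ satisfies $u\,p_B=u$, it has finite total mass because \eqref{ar} makes $M=\lambda\mu$ finite, so the jump chain is positive recurrent and $\sum_\xi\lambda(\xi)\phi(\xi)=\mb E_{\nu^*_{AB}}[\bb H_B]<\infty$, while $\sum_\xi\lambda(\xi)\psi(\xi)\le \sum_\xi\lambda(\xi)\mu(\xi)/\Cap(A,B)<\infty$; iterating $u\,p_B^n=u$ and using $p_B^n(\eta,\xi)\le \mb P_\eta[\bb H_B>n]\to 0$ together with dominated convergence gives $u\equiv 0$. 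With that repair your proof is complete and is a legitimate alternative to the paper's time--reversal argument.
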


\begin{proof}
We first claim that the proposition holds for indicator functions of
states.  Fix an arbitrary state $\xi\in E$. If $\xi$ belongs to $B$
the right hand the left hand side of \eqref{01} vanish. We may therefore
assume that  $\xi$ does not belong to  $B$. In this case we may write
the expectation appearing in the statement of the lemma as
\begin{equation*}
{\bf E}_{\nu^*_{AB}}\Big[\, 
\sum_{n=0}^{\bb H_B -1} \frac{e_n}{\lambda(\xi)} \,{\bf 1}\{ Y_n=\xi \} \,
\Big] \;, 
\end{equation*}
where $\{Y_n : n\ge 0\}$ is the discrete time embedded Markov chain,
$\{e_n : n\ge 0\}$ is a sequence of i.i.d.\! mean one exponential
random variables independent of the jump chain $\{Y_n : n\ge 0\}$, and
$\bb H_B$ the hitting time of the set $B$ for the discrete time Markov
chain $Y_n$. By the Markov property and by definition of the harmonic
measure $\nu^*_{AB}$, this expression is equal to
\begin{equation*}
\begin{split}
& \frac{1}{\lambda(\xi)} \sum_{\eta\in A} \sum_{n\ge 0} \nu^*_{AB} (\eta) \,  
{\bf P}_{\eta} \big[ Y_n=\xi \,,\, n< \bb H_B\Big] \\
& \qquad =\; \frac{1}{\lambda(\xi)\, \Cap^*(A,B)} \sum_{n\ge 0}
\sum_{\eta\in A} M(\eta)\, {\bf P}^*_{\eta} \big[\, H_B < H^+_A  \,\big]
{\bf P}_{\eta} \big[ Y_n=\xi \,,\, n< \bb H_B\big]\;.
\end{split}
\end{equation*} 
We may replace the hitting time and the return time $H_B$, $H^+_A$ by
the respective times $\bb H_B$, $\bb H^+_A$ for the discrete chain. On
the other hand, since $\eta$ and $\xi$ do not belong to $B$, the event
$\{Y_0=\eta \,,\, Y_n=\xi \,,\, n< \bb H_B\}$ represents all paths
that started from $\eta$, reached $\xi$ at time $n$ without passing
through $B$. In particular, by the detailed balanced relations between
the process and its adjoint, $M(\eta)\, {\bf P}_{\eta} [ Y_n=\xi \,,\,
n< \bb H_B] = M(\xi)\, {\bf P}^*_{\xi} [ Y_n=\eta \,,\, n< \bb H_B]$
and the last sum becomes
\begin{equation*}
\begin{split}
& \frac{M(\xi)}{\lambda(\xi)\, \Cap^*(A,B)} \sum_{n\ge 0}
\sum_{\eta\in A}  {\bf P}^*_{\eta} \big[\, \bb H_B < \bb H^+_A  \,\big]
{\bf P}^*_{\xi} \big[ Y_n=\eta \,,\, n< \bb H_B\big] \\
&\qquad
=\; \frac{M(\xi)}{\lambda(\xi)\, \Cap^*(A,B)} \sum_{n\ge 0}
\sum_{\eta\in A}
{\bf P}^*_{\xi} \big[ Y_n=\eta \,,\, n< \bb H_B 
\;,\, \bb H_B \circ \theta_n < \bb H^+_A \circ \theta_n \big] \;,
\end{split}
\end{equation*}
where we used the Markov property in the last step. In this formula
$\{\theta_k : k\ge 1\}$ stands for the group of discrete time shift.
Summing over $\eta$ the sum can be written as
\begin{equation*}
\frac{M(\xi)}{\lambda(\xi)\, \Cap^*(A,B)} \sum_{n\ge 0}
{\bf P}^*_{\xi} \big[ Y_n \in A \,,\, n< \bb H_B 
\;,\, \bb H_B \circ \theta_n < \bb H^+_A \circ \theta_n \big]
\end{equation*}
The set inside the probability represents the event that the process
$Y_k$ visits $A$ before visiting $B$ and that its last visit to
$A$ before reaching $B$ occurs at time $n$. Hence, since $M(\xi) =
\lambda(\xi) \mu(\xi)$, since by \eqref{a02} $\Cap^*(A,B) = \Cap(A,B)$
and since $g$ is the indicator of the state $\xi$, summing over $n$ we
get that the previous expression is equal to
\begin{equation*}
\frac 1{\Cap^*(A,B)} \,
\mu (\xi)\, {\bf P}^*_{\xi} \big[ \bb H_A < \bb H_B  \big]
\;=\; \frac{\langle\, g \,,\, f^*_{AB}\rangle_{\mu}}
{\Cap(A,B)} \;\cdot
\end{equation*}

By linearity and the monotone convergence theorem we get
the desired result for positive and then $\mu$-integrable functions.
\end{proof}

In the particular case where $A=\{\eta\}$ for $\eta\not\in B$ we have
that
\begin{equation}
\label{g02}
\mb E_{\eta} \Big[\, \int_0^{H_B} g(\eta (s))\,ds \, \Big] \;=\; 
\frac{ \langle \,g\,,\, f^*_{\{\eta\} B} \, \rangle_{\mu}}
{\Cap(\{\eta\},B)}
\end{equation}
for any $\mu$-integrable function $g$.  This formula provides an
estimation for the thermalization. 

Let $S$ be a finite set, let $\pi=\{A^x : x\in S\}$ be a partition of
$E$, and let $\xi_x$ be a state in $A^x$ for each $x\in S$. For each
$\mu$-integrable function $g$ denote by $\langle
g|\pi\rangle_{\mu}:E\to\bb R$ the conditional expectation of $g$,
under $\mu$, given the $\sigma$-algebra generated by $\pi$:
$$
\langle g|\pi\rangle_{\mu}\,=\,\sum_{x\in S} \frac{\langle g \, \mb
1\{A^x\}\, \rangle_{\mu}}{\mu(A^x)}\, \mb 1 \{A^x\}\,. 
$$
For each $x\in S$, let
$$
\Cap(\xi_x)\,:=\,\inf_{\eta\in A^x\setminus\{\xi_x\}}
\Cap(\{\eta\}, \{\xi_x\})\,.
$$
The next result shows that if the process thermalizes quickly in each
set of the partition, we may replace time averages of a bounded
function by time averages of the conditional expectation. This
statement plays a key role in the investigation of metastability. It
assumes, however, the existence of an attractor.

\begin{corollary}
\label{s09}
Let $g:E\to\bb R$ be a $\mu$-integrable function. Then, for every
$t>0$,
\begin{equation*}
\sup_{\eta\in E} \Big\vert \mb E_{\eta} \Big[ \int_0^t \big\{
g-\langle g | \pi\rangle_{\mu}\big\} (\eta (s)) \, ds \Big] \Big\vert 
\;\le\; 4 \sum_{x\in S}
\frac{ \langle \,|g|\, \mb 1\{A^x\}\, \rangle_{\mu}}{\Cap(\xi_x)} \;, 
\end{equation*}
where $|g|(\eta)=|g(\eta)|$ for all $\eta$ in $E$.
\end{corollary}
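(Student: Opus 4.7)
The plan is to first reduce the bound to a sum over $x$ of independent estimates. Set $\bar g_x := \<g\,\mb 1\{A^x\}\>_\mu / \mu(A^x)$ and $g_x := (g - \bar g_x)\,\mb 1\{A^x\}$. Then $h := g - \<g|\pi\>_\mu = \sum_{x\in S} g_x$; each $g_x$ is supported on $A^x$ with $\<g_x\>_\mu = 0$, and $\<|g_x|\>_\mu \le 2\<|g|\,\mb 1\{A^x\}\>_\mu$. It will suffice to prove that, for each $x\in S$ and each $\eta\in E$,
\begin{equation*}
\Big| \mb E_\eta\Big[\int_0^t g_x(\eta(s))\,ds\Big]\Big| \;\le\; \frac{4\,\<|g|\,\mb 1\{A^x\}\>_\mu}{\Cap(\xi_x)}\,,
\end{equation*}
and then sum over $x$.

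For fixed $x$ I would introduce the auxiliary potential
\begin{equation*}
\phi_x(\zeta) \;:=\; \mb E_\zeta\Big[\int_0^{H_{\xi_x}} g_x(\eta(s))\,ds\Big]\,, \qquad \zeta\in E\,,
\end{equation*}
and establish the uniform bound $\sup_{\zeta\in E}|\phi_x(\zeta)| \le 2\<|g|\,\mb 1\{A^x\}\>_\mu / \Cap(\xi_x)$. For $\zeta\in A^x\setminus\{\xi_x\}$, identity \eqref{g02} gives $\phi_x(\zeta) = \<g_x, f^*_{\{\zeta\}\{\xi_x\}}\>_\mu /\Cap(\{\zeta\},\{\xi_x\})$; since $0\le f^*\le 1$ and $g_x$ is supported on $A^x$, the numerator is controlled in absolute value by $\<|g_x|\>_\mu \le 2\<|g|\,\mb 1\{A^x\}\>_\mu$, while $\Cap(\{\zeta\},\{\xi_x\})\ge \Cap(\xi_x)$ by definition. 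For $\zeta\notin A^x$, the strong Markov property at $H_{A^x}$ yields $\phi_x(\zeta) = \mb E_\zeta[\,\phi_x(\eta(H_{A^x}))\,\mb 1\{\eta(H_{A^x})\ne\xi_x\}\,]$, because $g_x$ vanishes on $E\setminus A^x$ and, on the event $\{\eta(H_{A^x})=\xi_x\}$, $H_{\xi_x} = H_{A^x}$ so the integral is null. This reduces the bound at $\zeta\notin A^x$ to the case already handled. Finally $\phi_x(\xi_x)=0$ trivially.

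The second ingredient is the Poisson-type identity $L\phi_x = -g_x$ on all of $E$. The equation $L\phi_x(\zeta)=-g_x(\zeta)$ for $\zeta\ne\xi_x$ comes from a standard first-jump analysis. At $\zeta=\xi_x$ one invokes the mean-zero condition $\<g_x\>_\mu=0$ together with $\<L\phi_x,1\>_\mu = \<\phi_x,L^*1\>_\mu = 0$, which gives
\begin{equation*}
\mu(\xi_x)\,L\phi_x(\xi_x) \;=\; -\sum_{\zeta\ne\xi_x}\mu(\zeta)L\phi_x(\zeta) \;=\; \sum_{\zeta\ne\xi_x}\mu(\zeta)g_x(\zeta) \;=\; -\mu(\xi_x)g_x(\xi_x)\,.
\end{equation*}
Since $\phi_x$ is bounded by the previous step, Dynkin's formula then produces
\begin{equation*}
\mb E_\eta\Big[\int_0^t g_x(\eta(s))\,ds\Big] \;=\; \phi_x(\eta) - \mb E_\eta[\phi_x(\eta(t))]\,,
\end{equation*}
and the triangle inequality together with the sup-bound on $\phi_x$ yields the desired estimate, the factor $4$ arising as $2\cdot 2$.

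I expect the principal obstacle to lie in the bound on $\phi_x(\zeta)$ for $\zeta\notin A^x$: a direct use of \eqref{g02} would only give control by $\Cap(\{\zeta\},\{\xi_x\})$, which can be arbitrarily small and has nothing to do with the local quantity $\Cap(\xi_x)$; the strong Markov reduction through the basin $A^x$ is what rescues the estimate. The extension of $L\phi_x=-g_x$ up to the attractor $\xi_x$ itself, where the mean-zero property of $g_x$ is genuinely used, is the other delicate point—without it Dynkin's formula would leave an extra correction at $\xi_x$ and the clean identity above would fail.
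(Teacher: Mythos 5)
Your argument is correct, but it does not follow the paper's route: the paper disposes of this corollary in one line, by invoking the replacement estimate of \cite[Corollary 6.5]{bl2} and supplying only the two nonreversible ingredients, formula \eqref{g02} and the bound $f^*_{AB}\le 1$. You instead reprove the imported estimate from scratch: the decomposition $g-\langle g|\pi\rangle_\mu=\sum_x g_x$ with $g_x$ mean zero and supported in $A^x$, the potential $\phi_x(\zeta)=\mb E_\zeta\big[\int_0^{H_{\xi_x}}g_x(\eta(s))\,ds\big]$, the Poisson equation $L\phi_x=-g_x$ (closed at $\xi_x$ by stationarity of $\mu$ and $\langle g_x\rangle_\mu=0$), and Dynkin's formula; \eqref{g02} and $f^*\le 1$ enter exactly where they enter in the paper, namely to bound $|\phi_x|$ on $A^x\setminus\{\xi_x\}$ by $2\langle |g|\,\mb 1\{A^x\}\rangle_\mu/\Cap(\xi_x)$, and your strong Markov reduction through $H_{A^x}$ correctly propagates this bound to all of $E$ — you rightly identify that a naive use of \eqref{g02} from $\zeta\notin A^x$ would only produce $\Cap(\{\zeta\},\{\xi_x\})$, which is useless. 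What your route buys is a self-contained proof with transparent bookkeeping of the constant $4=2\times 2$; what the citation buys the authors is not having to address the analytic fine print that your sketch glosses over and that you should make explicit: (i) Dynkin's formula for $\phi_x$, which is bounded but need not belong to the domain of $L$, must be justified by localizing at the jump times and using non-explosion (guaranteed by irreducibility and positive recurrence) together with $\mb E_\eta\big[\int_0^t |g_x|(\eta(s))\,ds\big]<\infty$; the latter is not automatic for a merely $\mu$-integrable $g$ and requires, e.g., an excursion decomposition at $\xi_x$ combined with Kac's formula (it is also what makes the left-hand side of the corollary well defined in the first place); (ii) the identity $\langle L\phi_x,1\rangle_\mu=0$ used to close the Poisson equation at $\xi_x$ rests on an exchange of summations, legitimate because $\phi_x$ is bounded and the holding rates are $\mu$-summable by assumption \eqref{ar}. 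With these two points spelled out, your proof is complete and yields exactly the stated bound.
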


The proof of this result follows from \cite[Corollary 6.5]{bl2},
formula (\ref{g02}) and the fact that $f^*_{AB}$ is bounded by one.

\end{document}